\newcommand{\R}{\mathbb{R}}
\newcommand{\N}{\mathbb{N}}
\newcommand{\Rpos}{\mathbb{R}_{+}}
\newcommand{\diam}{\operatorname*{diam}}
\newcommand{\abs}[1]{\left\vert #1 \right\vert}
\newcommand{\skp}[1]{\left< #1 \right>}
\newcommand{\norm}[1]{\left\| #1 \right\|}
\newcommand{\dist}{\operatorname*{dist}}
\renewcommand{\div}{\operatorname*{div}}
\begin{document}

\title*{Exponential convergence of $hp$-FEM for the integral fractional Laplacian in 1D}
% Use \titlerunning{Short Title} for an abbreviated version of
% your contribution title if the original one is too long
\author{Markus Faustmann \and Carlo Marcati \and Jens Markus Melenk \and Christoph Schwab}
% Use \authorrunning{Short Title} for an abbreviated version of
% your contribution title if the original one is too long
\institute{Markus Faustmann, Jens Markus Melenk  \at Institute of Analysis and Scientific Computing, TU Wien, Vienna, Austria 
\email{\{markus.faustmann, melenk\}@tuwien.ac.at} \newline
Carlo Marcati, Christoph Schwab 
\at Seminar for Applied Mathematics, ETH Z\"urich, Z\"urich, Switzerland 
\email{\{carlo.marcati, schwab\}@sam.math.ethz.ch}
}%
%
% Use the package "url.sty" to avoid
% problems with special characters
% used in your e-mail or web address
%
\maketitle

%\abstract*{Each chapter should be preceded by an abstract (no more than 200 words) that summarizes the content. The abstract will appear \textit{online} at \url{www.SpringerLink.com} and be available with unrestricted access. This allows unregistered users to read the abstract as a teaser for the complete chapter.
%Please use the 'starred' version of the \texttt{abstract} command for typesetting the text of the online abstracts (cf. source file of this chapter template \texttt{abstract}) and include them with the source files of your manuscript. Use the plain \texttt{abstract} command if the abstract is also to appear in the printed version of the book.}

\abstract{We prove weighted analytic regularity for the solution 
of the integral fractional Poisson problem on bounded intervals 
with analytic right-hand side. Based on this regularity result,
we prove exponential convergence of the $hp$-FEM 
on geometric boundary-refined meshes. 
}

\section{Introduction}
The (numerical) analysis of non-integer powers of elliptic differential operators has garnered a lot of interest recently, as such operators can be used to derive models for anomalous diffusion processes in various applications. 
The prototype of such an operator is the so called fractional Laplacian $(-\Delta)^s$ for $s \in (0,1)$, which can be defined in several different ways. 
On the full space, a classical way is to define it as an operator with Fourier symbol $|\xi|^{2s}$, but (equivalent) alternatives via semi-group theory, operator theory, or via singular integrals exist as well,~\cite{Kwasnicki}.  Here, we consider the so called integral fractional Laplacian, using the singular integral definition.

Solutions to equations involving the integral fractional Laplacian on bounded domains usually are non-smooth at the boundary and feature only finite regularity
even if the data is smooth, \cite{Grubb15};  
%However, the loss of regularity can be compensated by using appropriate weight functions. 
we mention finite regularity results in Hölder spaces \cite{ros-oton-serra14,Grubb15} (smooth domains), in isotropic weighted Sobolev spaces \cite{acosta-borthagaray17} or unweighted Besov spaces \cite{BN21} (Lipschitz domains). In our recent work, \cite{FMMS21}, we provide weighted analytic regularity estimates in two space dimensions that reflect 
both the interior analyticity and the anisotropic nature of the solution near the boundary. 
In the present article, we provide similar weighted analytic estimates for the one dimensional case.

The numerical approximation of fractional PDEs by means of finite element techniques is in principle well understood; we mention
the survey articles \cite{GunbActa,BBNOS18,BLN19,LPGSGZMCMAK18} and references therein. An important active field is the design of suitably designed meshes to counteract the singular behavior at the boundary, \cite{acosta-borthagaray17,BBNOS18}. We mention \cite{GiEPSSt}, where a sharp analysis of vertex and edge 
singularities via Mellin techniques is used to derive the correct mesh grading 
for methods converging at the optimal algebraic rate. 
In this article, we leverage our weighted analytic regularity estimates to design an 
exponentially convergent method by means of $hp$-finite element approximation in one dimension.
The generalization to two dimensions is the topic of our follow-up work \cite{FMMS-hp}. 

The techniques employed in the present work are closely related to the higher dimensional case done in \cite{FMMS21}, 
but the one dimensional case allows for simplifications and 
clearer presentation of the main concepts. 
A crucial step is a reformulation of the fractional PDE as a Dirichlet to Neumann operator 
of a degenerate \emph{local} elliptic PDE, the so-called
Caffarelli-Silvestre extension, \cite{CafSil07}. 
For this extension, 
the second crucial step is that a (global) regularity shift of essentially $1/2$ 
can be obtained using difference quotient techniques, \cite{Savare}. 
Then, 
we derive interior regularity estimates of Caccioppoli type and bootstrap these regularity results 
to obtain weighted analytic regularity.
%Additionally, we present a numerical example in 1D that confirms the exponential convergence of the $hp$-FEM.
%
\section{Model problem and main results}
\subsection{The fractional Laplacian}
We consider the integral fractional Laplacian defined 
pointwise as the principal value integral  
\begin{align*}
(-\Delta)^su(x) 
:= 
C(s) \; \text{P.V.} \int_{\R}\frac{u(x)-u(z)}{\abs{x-z}^{1+2s}} \, dz 
\quad \text{with} \quad
C(s):= - 2^{2s}\frac{\Gamma(s+1/2)}{\pi^{1/2}\Gamma(-s)},
\end{align*}
where $\Gamma(\cdot)$ denotes the Gamma function. 

Fractional differential equations are conveniently described using fractional Sobolev spaces.
Let $\Omega \subset \R$ be an open, bounded interval. 
Denoting by $H^t(\Omega)$ the classical integer order Sobolev spaces with $t \in \N_0$, 
fractional order Sobolev spaces for $t \in (0,1)$ are 
defined in terms of the Aronstein-Slobodeckij seminorm 
$|\cdot|_{H^t(\Omega)}$ 
and the corresponding full norm $\|\cdot\|_{H^t(\Omega)}$,
which are given by
\begin{align*}
|v|^2_{H^t(\Omega)} 
= 
\int_{\Omega} \int_{\Omega} \frac{|v(x) - v(z)|^2}{\abs{x-z}^{1+2t}}
\,dz\,dx, 
\qquad 
\|v\|^2_{H^t(\Omega)} = \|v\|^2_{L^2(\Omega)} + |v|^2_{H^t(\Omega)}.
\end{align*}
For $t \in (0,1)$, we employ the spaces 
$\widetilde{H}^{t}(\Omega) := \left\{u \in H^t(\R^d) : u\equiv 0 \; \text{on} \; \R^d \backslash \overline{\Omega} \right\}$ 
with norm 
\begin{align*}
%\widetilde{H}^{t}(\Omega) := \left\{u \in H^t(\R^d) : u\equiv 0 \; \text{on} \; \R^d \backslash \overline{\Omega} \right\}, 
\;  \norm{v}_{\widetilde{H}^{t}(\Omega)}^2 := \norm{v}_{H^t(\Omega)}^2 + \norm{v/r^t}_{L^2(\Omega)}^2,
\end{align*}
where $r(x):=\operatorname{dist}(x,\partial\Omega)$ 
 denotes the Euclidean distance of a point
 $x \in \Omega$ from the boundary $\partial \Omega$.
%An equivalent norm is the full space norm $H^t(\R^d)$ of the zero extension of $u$. Throughout this work,
%we will frequently view functions in $\widetilde{H}^t(\Omega)$ as elements of $H^t(\R^d)$ through the zero extension.
%
For $t \in (0,1)\backslash \{\frac 1 2\}$, the norms $\norm{\cdot}_{\widetilde{H}^{t}(\Omega)}$  and $\norm{\cdot}_{H^{t}(\Omega)}$ are equivalent, 
see, e.g., \cite{Grisvard}.
For $t > 0$, the space $H^{-t}(\Omega)$ denotes the dual space of $\widetilde{H}^t(\Omega)$, 
and we write $\skp{\cdot,\cdot}_{L^2(\Omega)}$ 
for the duality pairing that extends the $L^2(\Omega)$-inner product.

On a bounded interval $\Omega \subset \R$, we consider 
the fractional differential equation 
%\begin{subequations}\label{eq:modelproblem}
\begin{align}\label{eq:modelproblem}
 (-\Delta)^su &= f \qquad \text{in}\, \Omega, \\
\label{eq:modelproblem-bc}
 u &= 0 \quad \quad\, \text{in}\, \Omega^c:=\R \backslash \overline{\Omega},
\end{align}
%\end{subequations}
%
where $s \in (0,1)$ and $f \in H^{-s}(\Omega)$ is a given right-hand side. 
The weak form of (\ref{eq:modelproblem}), (\ref{eq:modelproblem-bc}) 
is to find $u \in \widetilde{H}^s(\Omega)$ such that 
\begin{equation}
\label{eq:weakform}
a(u,v):= \frac{C(s)}{2} \int_\R\int_{\R} 
 \frac{(u(x)-u(z))(v(x)-v(z))}{\abs{x-z}^{1+2s}} \, dz \, dx = \skp{f,v}_{L^2(\Omega)} 
\end{equation}
for all $v \in \widetilde{H}^s(\Omega)$.
Existence and uniqueness of $u \in \widetilde{H}^s(\Omega)$ follow from
the Lax--Milgram Lemma for any $f \in H^{-s}(\Omega)$, upon the observation
that the bilinear form $a(\cdot,\cdot): \widetilde{H}^s(\Omega)\times \widetilde{H}^s(\Omega)\to \R$ 
is continuous and coercive, 
see, e.g., \cite[Sec.~{2.1}]{acosta-borthagaray17}.
\subsection{Weighted analytic regularity}
\label{sec:WgtAnReg}
Our first main result, Theorem \ref{thm:analytic-regularity}, 
asserts analytic regularity of the solution $u\in \widetilde{H}^s(\Omega)$ 
to our model problem \eqref{eq:weakform} in scales of weighted Sobolev spaces, 
provided $f$ in \eqref{eq:weakform} is analytic in $\overline{\Omega}$.
The weights for these spaces are powers of the distance to the boundary of 
the computational domain, $r(x):= \operatorname{dist}(x,\partial\Omega)$.

\begin{theorem}
\label{thm:analytic-regularity}
Let the data $f\in C^\infty(\overline{\Omega})$ satisfy, for constants $C_f$, $\gamma_f >0$, 
\begin{align*}
\forall p \in \N_0\colon \quad 
\norm{D^p f}_{L^2(\Omega)} \leq C_f\gamma_f^p p!. 
\end{align*}
Let $u$ solve \eqref{eq:weakform}.
Then, there is $\gamma$ (depending only on $\gamma_f$, $s$, $\Omega$) such that for any $\varepsilon>0$ 
there exists  a constant $C_{\varepsilon}>0$ (depending on $C_f$, $s$, $\Omega$, $\varepsilon$) 
such that 
\begin{align*}
\forall p \in \N\colon \quad 
\norm{r^{p-1/2-s+\varepsilon} D^p  u}_{L^2(\Omega)} \leq C_{\varepsilon} \gamma^p p!.
\end{align*}
In terms of the space 
${\mathcal B}^1_\beta := \{u \in L^2(\Omega)\,:\, \|r^{n + \beta} D^{n+1}u\|_{L^2(\Omega)}  \leq C \gamma^n n! \quad \forall n \in \N_0\}$
we have $u \in {\mathcal B}^1_\beta$ for $\beta = 1/2-s+\varepsilon$. In particular, $u \in C(\overline{\Omega})$. 
\end{theorem}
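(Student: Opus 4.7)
My plan is to follow the road map sketched in the introduction and transfer the regularity question from the nonlocal operator $(-\Delta)^s$ to a local degenerate elliptic PDE via the Caffarelli--Silvestre extension. Concretely, I would let $U$ be the extension of $u$ to the half-plane $\Omega \times (0,\infty)$ satisfying $\operatorname{div}(y^{1-2s}\nabla U)=0$ together with $U(\cdot,0)=u$ and a weighted Neumann trace encoding $(-\Delta)^s u = f$. In the interior of $\Omega$ analyticity of $U$ (and hence of $u$) away from $y=0$ follows from classical interior analyticity results for this degenerate elliptic operator, with quantitative Cauchy-type bounds on derivatives depending only on the size of the subdomain. The delicate part is the behaviour of $u$ at $\partial \Omega$, where only finite regularity is available, and this is where the weights $r^{p-1/2-s+\varepsilon}$ enter.

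The starting point of the bootstrap is a global shift estimate. Using the Savar\'e-type difference-quotient argument, applied to the extension $U$ in the tangential direction, I expect a gain of essentially $1/2$ a derivative and hence $u\in \widetilde{H}^{s+1/2-\varepsilon}(\Omega)$ with an explicit bound in terms of $C_f$. The $\varepsilon$ loss in the statement of the theorem is inherited from this step. On this base estimate I would superimpose a dyadic decomposition of $\Omega$ into overlapping subintervals $I_k$ at distance $\sim 2^{-k}$ from $\partial\Omega$ and of length $\sim 2^{-k}$. Scaling each $I_k$ (together with a box above it) to a reference configuration of unit size converts interior Caccioppoli-type estimates for the Caffarelli--Silvestre extension into a recursion that controls $\|D^{p+1}u\|_{L^2(I_k)}$ by a weighted combination of lower-order derivatives on a slightly enlarged interval plus a contribution from $f$; the scaling factors produce exactly the $r$-powers of the theorem.

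The core of the argument is then an induction on $p$ that combines these rescaled Caccioppoli estimates with the analytic bound on $f$. This is where I expect the main obstacle: one has to track the constants carefully so that the right-hand side grows like $\gamma^p p!$ rather than $\gamma^p (p!)^{1+\alpha}$. The standard trick is a discrete Gronwall / tail-summation argument in which the constant $\gamma$ is chosen large enough to absorb the dyadic interaction between neighbouring $I_k$. Summation over $k$ of the local bounds yields a globally finite $L^2$ norm precisely because the weight $r^{p-1/2-s+\varepsilon}$ contributes a geometric factor $2^{-k\varepsilon}$ per layer, explaining both the necessity of $\varepsilon>0$ and the dependence of $C_\varepsilon$ on $\varepsilon$.

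The final assertions are essentially corollaries. Rewriting $\|r^{p-1/2-s+\varepsilon}D^p u\|_{L^2(\Omega)} \leq C_\varepsilon \gamma^p p!$ with $n=p-1$ gives the membership $u\in \mathcal{B}^1_\beta$ with $\beta = 1/2-s+\varepsilon$. Continuity on $\overline{\Omega}$ then follows from the $\mathcal{B}^1_\beta$-membership and $u=0$ on $\partial\Omega$: taking $\varepsilon$ small enough so that $\beta<1/2$, the one-dimensional weighted Hardy / Sobolev embedding associated with $\mathcal{B}^1_\beta$ places $u$ in $C(\overline{\Omega})$.
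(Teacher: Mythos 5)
Your overall architecture matches the paper's: Caffarelli--Silvestre extension, a Savar\'e-type difference-quotient shift of (almost) $1/2$ in the tangential variable, tangential Caccioppoli estimates iterated to control $\partial_x^p U$, and a dyadic covering of $\Omega$ by intervals of size comparable to their distance to $\partial\Omega$, with the $\varepsilon$-loss absorbed by a geometric series $\sum_i r_i^{2\varepsilon}<\infty$. The induction, the role of $\varepsilon$, and the derivation of the $\mathcal{B}^1_\beta$-membership and of $u\in C(\overline\Omega)$ are all as in the paper.

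There is, however, one genuine gap: you never explain how to pass from estimates on the extension $U$ back to estimates on $u=\operatorname{tr}U$, and this is precisely where the exponent $-1/2-s$ in the weight comes from. The dyadic/Caccioppoli machinery only yields bounds of the form $\norm{r^{p-t+\varepsilon}\nabla\partial_x^p U}_{L^2_\alpha(\Omega\times\Rpos)}\lesssim \tilde\gamma^p p!$ for $t<1/2$, i.e.\ weighted $L^2_\alpha$-control of $\nabla\partial_x^p U$ over the half-plane; your claim that ``the scaling factors produce exactly the $r$-powers of the theorem'' is not accurate, since those scalings account only for the $r^{p}$ part. To get $\norm{r^{p-1/2-s+\varepsilon}\partial_x^p u}_{L^2(\Omega)}$ one needs the anisotropic multiplicative trace inequality (as in the paper, from \cite{KarMel19})
\begin{equation*}
\abs{V(x,0)}^2 \lesssim \norm{V(x,\cdot)}_{L^2_\alpha(\Rpos)}^{1-\alpha}\,\norm{\partial_y V(x,\cdot)}_{L^2_\alpha(\Rpos)}^{1+\alpha}
+\norm{V(x,\cdot)}^2_{L^2_\alpha(\Rpos)}, \qquad \alpha=1-2s,
\end{equation*}
applied to $V=\partial_x^pU$; the interpolation exponents $1\mp\alpha$ are exactly what converts the two weights $r^{p-3/2+\varepsilon}$ and $r^{p-1/2+\varepsilon}$ (both reachable with $t<1/2$) into the single weight $r^{p-1/2-s+\varepsilon}$. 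A related minor point: the Caccioppoli estimates you need are not ``interior analyticity away from $y=0$'' but estimates on half-balls $B_{cR}\times\Rpos$ \emph{up to} $y=0$ for the tangential derivatives $\partial_x^p$ (which commute with $-\div(y^\alpha\nabla\cdot)$); an argument confined to $y>0$ would not control the trace. With the trace inequality supplied, your sketch closes.
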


\subsection{Exponential convergence of $hp$-FEM}
\label{sec:1D-hpFEM}
Once weighted regularity results are available, numerical approximation by means of the $hp$-FEM, \cite{schwab98}, can be analyzed. 
In fact, employing geometric meshes and piecewise polynomials of higher degree, our second main result, Theorem~\ref{thm:hp-approx}, 
states exponential convergence of the $hp$-FEM for the integral fractional Laplacian.

\begin{definition}\label{def:geometric-mesh} {\bf (geometric mesh on $(-1,1)$) \;}
On $(-1,1)$, for a \emph{grading factor $\sigma \in (0,1)$} 
and a number \emph{$L$ of layers of geometric refinement}, 
the \emph{geometric mesh} ${\mathcal T}^{L}_{geo,\sigma} = 
\{T_i\colon i=1,\ldots,2L+2\}$ 
with $2L+2$ elements $T_i = (x_{i-1},x_{i})$ 
is given by the nodes
\begin{equation}\label{eq:GeoMes} 
\begin{array}{c}
x_0 := -1, \quad x_i = -1 + \sigma^{L-i+1}, \quad i=1,\ldots,L , 
\\
x_{i+1} = 1-\sigma^{i-L}, \quad i=L,\ldots,2L, \quad x_{2L+2} := 1. 
\end{array}
\end{equation} 
\end{definition}
Geometric partitions $\mathcal{T}^L_{geo,\sigma}$ on bounded intervals $\Omega \subset \R$ 
are obtained from \eqref{eq:GeoMes} by translation and dilation. 
Key features of a geometric partition 
${\mathcal T}^{L}_{geo,\sigma}$ are
a) elements $T_i \in \mathcal{T}^L_{geo,\sigma}$ with 
$\overline{T}_i \cap \partial\Omega  = \emptyset$ 
satisfy 
$\diam(T_i) \sim \dist(T_i,\partial\Omega)$ 
and 
b) elements $T_i \in \mathcal{T}^L_{geo,\sigma}$ with 
$\overline{T}_i \cap \partial\Omega \ne \emptyset$ satisfy 
$\diam(T_i)  = \mathcal{O}(\sigma^L)$. 
\medskip

On a geometric mesh $\mathcal{T}^{L}_{geo,\sigma}$ and for a polynomial degree $p \in \N$,
we introduce the spline space 
$S^{p,1}(\mathcal{T}^{L}_{geo,\sigma}) := \{v \in H^1(\Omega) : v|_{T_i} \in \mathcal{P}_p(T_i) \; \forall T_i \in \mathcal{T}^{L}_{geo,\sigma}\}$. 
Here, %$p \in \N$ denotes a fixed polynomial degree and 
$\mathcal{P}_p(T_i)$ is the space of all polynomials of (at most) degree $p$ on $T_i$. 
The subspace with zero boundary conditions is
$S^{p,1}_0(\mathcal{T}^{L}_{geo,\sigma}) 
:= 
\{v \in S^{p,1}(\mathcal{T}^{L}_{geo,\sigma}) \; : \; v|_{\partial \Omega} = 0 \}$. 
We note 
$N:= \operatorname{dim} S^{p,1}_0(\mathcal{T}^{L}_{geo,\sigma}) \sim p L$. 

The $hp$-FEM approximation $u_N$ is the Galerkin discretization of 
\eqref{eq:weakform}:
\begin{align}\label{eq:hp-approx}
u_N \in S^{p,1}_0(\mathcal{T}^{L}_{geo,\sigma}):\quad 
a(u_N,v_N) =  \skp{f,v_N}_{L^2(\Omega)} \;\; 
\forall v_N \in S^{p,1}_0(\mathcal{T}^{L}_{geo,\sigma}).  
\end{align}

\begin{theorem}
\label{thm:hp-approx}
Let $\mathcal{T}^L_{geo,\sigma}$ be a geometric mesh on the interval $\Omega$ with grading factor $\sigma \in (0,1)$ and $L$ layers of refinement towards the boundary points. Let $u_N \in S^{p,1}_0(\mathcal{T}^{L}_{geo,\sigma})$ 
solve \eqref{eq:hp-approx} and $u$ solve \eqref{eq:weakform}. 
Then, there are constants $C_{\rm apx}$, $b > 0$ 
independent of $p$ and $L$  such that
\begin{align*}
 \norm{u-u_N}_{\widetilde H^s(\Omega)} \leq C_{\rm apx}(e^{-bp} + \sigma^{(1-\beta-s)L}),
\end{align*}
where $\beta \in (0,1)$ is given by Theorem~\ref{thm:analytic-regularity}. 
The particular choice $L \sim p$ leads to convergence 
 $\norm{u-u_N}_{\widetilde H^s(\Omega)} \leq C \exp(-b' \sqrt{N})$, 
where $N = \operatorname{dim} S^{p,1}_0({\mathcal T}^{L}_{geo,\sigma})$ 
is the problem size, and $C$, $b'$ are constants independent of $N$.
\end{theorem}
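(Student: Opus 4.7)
The plan is to combine quasi-optimality of the Galerkin scheme with a careful construction of an $hp$-quasi-interpolant that exploits the weighted analytic regularity from Theorem~\ref{thm:analytic-regularity}. Since $a(\cdot,\cdot)$ is continuous and coercive on $\widetilde H^s(\Omega)$, C\'ea's lemma gives
\[
\norm{u - u_N}_{\widetilde H^s(\Omega)} \leq C \inf_{v_N \in S^{p,1}_0(\mathcal{T}^{L}_{geo,\sigma})} \norm{u - v_N}_{\widetilde H^s(\Omega)},
\]
so it suffices to exhibit a single $\Pi u \in S^{p,1}_0(\mathcal{T}^{L}_{geo,\sigma})$ achieving the claimed bound. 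I split $\mathcal{T}^{L}_{geo,\sigma}$ into the two boundary elements $T_1$, $T_{2L+2}$ (both of diameter $\sim \sigma^L$) and the set of interior elements, and construct $\Pi u$ separately on each piece.

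On each interior element $T_i$, property (a) of the geometric mesh gives $r(x)\sim h_i := \diam(T_i)$ uniformly on $T_i$, so Theorem~\ref{thm:analytic-regularity} translates into $\norm{D^{q} u}_{L^2(T_i)} \lesssim \gamma^{q} q!\, h_i^{-(q - 1/2 - s + \varepsilon)}$ for every $q \in \N$. After affine rescaling to a reference interval, this is precisely analyticity with an element-independent radius, and classical one-dimensional $hp$-approximation theory (e.g.\ truncated Legendre expansions or Gauss--Lobatto interpolation) produces a polynomial $\pi_p u$ of degree $p$ with
\[
\norm{u - \pi_p u}_{H^1(T_i)} + h_i^{-1}\norm{u - \pi_p u}_{L^2(T_i)} \lesssim h_i^{s - \varepsilon - 1/2}\, e^{-bp}
\]
for some $b > 0$ depending only on $\gamma$ and $\sigma$. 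On the two boundary elements I set $\Pi u \equiv 0$; this is justified because $u \in \mathcal{B}^1_\beta$ with $\beta = 1/2 - s + \varepsilon$, together with $u(\pm 1) = 0$ and a weighted Cauchy--Schwarz bound on $u(x) = \int_{-1}^x u'(t)\,dt$, yields $|u(x)| \lesssim (x+1)^{1/2 - \beta} = (x+1)^{s - \varepsilon}$ near $x=-1$, so direct integration gives $\norm{u}_{L^2(T_1)} + \norm{u/r^s}_{L^2(T_1)} + |u|_{H^s(T_1)} \lesssim \sigma^{(1-\beta-s)L}$. The element-wise polynomials are glued into a globally $C^0$, boundary-vanishing function by a standard $hp$-conforming nodal modification at the interface nodes; the smallness of $u(x_1), u(x_{2L+1})$ controls the cost of matching to $0$ across the boundary elements.

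Summing the interior contributions (the geometric series in $h_i^{s-\varepsilon-1/2}$ converges uniformly in $L$ because $s - \varepsilon < 1/2$) and adding the boundary contributions yields $\norm{u - u_N}_{\widetilde H^s(\Omega)} \lesssim e^{-bp} + \sigma^{(1-\beta-s)L}$; balancing via $L \sim p$ gives $N \sim pL \sim p^2$ and hence the claimed $\exp(-b'\sqrt N)$ rate. The principal obstacle is the \emph{non-locality} of the $\widetilde H^s$-norm: unlike the $H^1$-norm, the Aronstein--Slobodeckij seminorm couples distinct elements through the double integral $\iint_{T_i \times T_j} |v(x) - v(z)|^2|x-z|^{-1-2s}\,dz\,dx$, and the term $\norm{v/r^s}_{L^2(\Omega)}$ is delicate near $\partial\Omega$. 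The strategy I would adopt to overcome this is to bound $\norm{v}_{\widetilde H^s(\Omega)}$ for $v = u - \Pi u$ via the embedding $\widetilde H^s(\Omega) \hookrightarrow H^s(\R)$ (zero extension) and interpolate between a global $L^2$- and a global $H^1_0$-estimate of the quasi-interpolant; since $\Pi u$ is globally $H^1_0$-conforming, both estimates reduce to the local bounds constructed above, and the non-local cross terms are absorbed at the price of harmless constants.
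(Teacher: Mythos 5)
Your skeleton matches the paper's: C\'ea's lemma, elementwise $hp$-interpolation on interior elements using $r\sim h_i$ there, a low-order (here: trivial) approximation on the two boundary elements of size $\sigma^L$, and balancing $L\sim p$ to get $\exp(-b'\sqrt N)$. The elementwise interior bounds you derive are correct. The genuine gap is in the step you yourself flag as ``the principal obstacle'': passing from elementwise bounds to the global $\widetilde H^s(\Omega)$-norm. Your proposed fix --- interpolating between a global $L^2$- and a global $H^1_0$-estimate of the error --- does not work for $s\le 1/2$: on the boundary element the error is essentially $u$ itself, and since $u'\sim r^{s-1}$ near $\partial\Omega$, the error is not in $H^1(\Omega)$ at all, so there is no global $H^1_0$-estimate to interpolate with. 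Consistently, your claim that ``the geometric series in $h_i^{s-\varepsilon-1/2}$ converges uniformly in $L$ because $s-\varepsilon<1/2$'' is backwards: for $s<1/2$ the exponent $s-\varepsilon-1/2$ is negative, so $\sum_i h_i^{2(s-\varepsilon-1/2)}\sim \sigma^{-2L(1/2-s+\varepsilon)}$ diverges as $L\to\infty$; the unweighted elementwise $H^1$-errors do not sum.

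What is needed (and what the paper supplies in Lemma~\ref{lemma:weighted-embedding}) is a localizable norm that both dominates $\|\cdot\|_{\widetilde H^s(\Omega)}$ and tolerates the boundary singularity: the weighted norm $\|r^{\beta'}v'\|_{L^2(\Omega)}+\|r^{\beta'-1}v\|_{L^2(\Omega)}$ with $\beta'=1-s-\varepsilon'$. The embedding $H^1_{\beta'}(\Omega)\hookrightarrow\widetilde H^s(\Omega)$ for $s<1-\beta'$ is proved by a genuine $K$-functional construction: the decomposition $v=\chi_t v+(1-\chi_t)v$ into an $H^1_0$ part and an $L^2$ part is chosen $t$-dependently with cutoffs supported away from a $t$-neighborhood of $\partial\Omega$; it is not the naive interpolation inequality applied to $v$ itself. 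Once this weight is in place, the interior elementwise errors carry the extra factor $h_i^{\beta'}\sim h_i^{1-s-\varepsilon'}$, turning your exponent into $h_i^{1/2-\varepsilon-\varepsilon'}$ so that the geometric series does converge, and the boundary-element contribution is controlled by the weighted interpolation estimate of Lemma~\ref{lemma:approx-on-small-element} (note that your pointwise bound $|u|\lesssim r^{s-\varepsilon}$ alone cannot control the $H^s$-seminorm on $T_1$; one needs $\|r^{\beta+1}u''\|_{L^2(T_1)}$ or $\|r^{\beta}u'\|_{L^2(T_1)}$ from the regularity class ${\mathcal B}^1_\beta$). Without this weighted-embedding step your argument does not close.
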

The rest of this note will provide short proofs of Theorems~\ref{thm:analytic-regularity} and~\ref{thm:hp-approx}.
%----------------------------------
\section{Regularity results}
%----------------------------------
%
\subsection{The Caffarelli-Silvestre extension}
\label{sec:CS-Ext}
The main tool in our regularity analysis is the very influential reformulation 
of the nonlocal fractional Laplacian as the Dirichlet-to-Neumann operator of a local,
degenerate elliptic PDE posed on a half space in one additional space dimension, 
the so-called \emph{Caffarelli-Silvestre extension}, \cite{CafSil07}.

For its formulation, let $\alpha := 1-2s$ 
and write $\omega^+:= \omega \times \R_+$ for any measurable subset $\omega \subset \R$, where $\R_+ = (0,\infty)$.
We define $L^2_\alpha(\omega^+)$  as the space of 
square-integrable functions with respect to the weight $y^\alpha$ with the norm 
\begin{align*}
 \norm{U}_{L^2_\alpha(\omega^+)}^2 := \int_{y \in \Rpos} y^{\alpha} \int_{x \in \omega} \abs{U(x,y)}^2 dx \, dy.
\end{align*}
Moreover, we introduce the Beppo-Levi space 
$H^1_{\alpha}(\R \times  \Rpos) := \{U \in L^2_{loc}(\R \times \Rpos)\,:\, 
\nabla U \in L^2_\alpha(\R \times \Rpos)\}$. 
For elements of $H^1_\alpha(\R \times \Rpos)$, one can give meaning to their trace 
at $y = 0$, which is denoted $\operatorname{tr} U$. 
In fact, 
$\operatorname{tr} U \in H^s(\R)$ (see, e.g., \cite[Lem.~3.8]{KarMel19}) with
$ \abs{\operatorname{tr} U}_{H^s(\R)} \lesssim \norm{\nabla U}_{L^2_\alpha(\R \times \Rpos)}$.
We also require the space 
$H^1_{\alpha,0}(\R \times \Rpos)
 :=
  \{ V\in  H^1_{\alpha}(\R \times \Rpos): \; \operatorname{tr} V = 0 \;\mbox{on}\; \Omega^c\}$.

Let data $F \in C^\infty_0(\R^{2})$ and $f \in C^\infty(\overline{\Omega})$ be given. 
The Caffarelli-Silvestre extension problem reads: Find $U = U(x,y) \in H^1_\alpha(\R \times \Rpos)$ such that
\begin{align}\label{eq:extension}
\nonumber
 -\div (y^\alpha \nabla U) &= F  &&\text{in} \; \R \times (0,\infty), 
\\ 
 \partial_{n_\alpha} U(\cdot,0) & = f  &&\text{in} \; \Omega, 
\\
\nonumber
\operatorname{tr} U & = 0 &&\text{on $\Omega^c$},
\end{align}
%\end{subequations}
where $\partial_{n_\alpha} U(x,0) = - d_s\lim_{y \rightarrow 0}  y^\alpha \partial_y U(x,y)$
for $d_s = 2^{2s-1}\Gamma(s)/\Gamma(1-s)$.
The weak form of (\ref{eq:extension}) is: Find $U \in H^1_{\alpha,0}(\R \times \Rpos)$ such that 
for all $V \in H^1_{\alpha,0}(\R \times \R_+)$ 
\begin{align}
b(U,V) 
:= \int_{\R \times \R_+} y^\alpha \nabla U \cdot \nabla V dx dy 
= \int_{\R \times \R_+} F V dx dy + \int_{\Omega} f \operatorname{tr} V dx. 
\end{align}
Finally, the solution $u$ to \eqref{eq:weakform} is given by 
$u = \operatorname{tr}U$ 
with $U$ being the unique solution of \eqref{eq:extension} with
$F = 0$.

%-------------------------------------------------------------------------------
\subsection{Global regularity}
%In the following, we deduce additional regularity of the extension problem in the $x$-direction.
%%
%We introduce the abbreviation $N(F,f)$ by
%\begin{equation} 
%N^2(F,f) :=  \|f\|^2_{H^{1}(\Omega)} + \|F\|^2_{L^2_{-\alpha}(\R\times\Rpos)}.
%\end{equation}

The following Lemma~\ref{lem:regularity} provides additional global regularity in the $x$-variable.  
Although it is a special case of \cite[Lem.~3.2]{FMMS21}, which is valid for any space dimension $d \geq 1$, 
we sketch the proof as it is crucial for this article.  
%It follows the ideas of \cite{Savare}. 

\begin{lemma}\label{lem:regularity}
Let $f\in C^\infty(\overline{\Omega})$, $F \in C^\infty_0(\R^2)$, and let $U$ solve \eqref{eq:extension}. 
Then, for $t \in [0,1/2)$, there is $C_t > 0$ depending only on $t$ and $\Omega$, such that
\begin{align*}
\int_{\R_+} y^\alpha \norm{\nabla U(\cdot, y)}_{H^t(\Omega)}^2 dy < C_t N^2(F,f)
\end{align*}
with
\begin{align}
\label{eq:CUFf}
N^2(F,f):= 
\|f\|^2_{H^{1}(\Omega)} + \|F\|^2_{L^2_{-\alpha}(\R\times\Rpos)}. 
\end{align}
\end{lemma}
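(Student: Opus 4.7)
The approach is the Nirenberg difference quotient method in the $x$-direction, adapted to the weighted, degenerate elliptic setting of the Caffarelli--Silvestre extension, as in Savare's framework. The threshold $t < 1/2$ in the conclusion reflects the classical half-derivative barrier for Dirichlet problems.

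For $t \in (0,1/2)$, I would start from the Aronstein--Slobodeckij representation of $H^t$. Setting $\delta_h W(x,y) := W(x+h,y) - W(x,y)$ and $D := \diam(\Omega)$, Fubini yields
\begin{align*}
\int_{\Rpos} y^\alpha \abs{\nabla U(\cdot,y)}^2_{H^t(\Omega)} dy
\leq C \int_0^{D} \abs{h}^{-1-2t} \Big(\int_{\Rpos} y^\alpha \norm{\delta_h \nabla U(\cdot,y)}^2_{L^2(\R)} dy\Big) dh,
\end{align*}
while the case $t=0$ and the $L^2$-part of the $H^t$-norm are controlled by the basic energy bound $\norm{\nabla U}_{L^2_\alpha} \lesssim N(F,f)$ obtained from Lax--Milgram.

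The core step is the \emph{difference quotient estimate}
\begin{align*}
\int_{\Rpos} y^\alpha \norm{\delta_h \nabla U(\cdot,y)}^2_{L^2(\R)} dy \leq C \abs{h}\, N^2(F,f),
\end{align*}
which, combined with the previous display, produces the integrable weight $\abs{h}^{-2t}$, finite precisely for $t < 1/2$. To derive it, test \eqref{eq:extension} with $V_h := \delta_{-h}(\eta^2 \delta_h U)$, where $\eta$ is a smooth cutoff supported at distance $\geq \abs{h}$ from $\partial\Omega$, so that $\operatorname{tr} V_h = 0$ on $\Omega^c$ and $V_h$ is admissible. Since $y^\alpha$ is independent of $x$, discrete integration by parts in $x$ commutes the difference operator through $b(\cdot,\cdot)$, giving
\begin{align*}
b(\delta_h U, \eta^2 \delta_h U) = \int (\delta_h F)\, \eta^2 \delta_h U\, dx\,dy + \int_\Omega (\delta_h f)\, \eta^2\, \delta_h (\operatorname{tr} U)\, dx.
\end{align*}
The Poincar\'e-type bound $\norm{\delta_h U}_{L^2_\alpha} \leq \abs{h}\,\norm{\partial_x U}_{L^2_\alpha}$ supplies one factor of $\abs{h}$ on the bulk side, $\norm{\delta_h f}_{L^2(\Omega)} \leq \abs{h}\,\norm{f'}_{L^2(\Omega)}$ supplies it on the trace side, and the Caffarelli--Silvestre trace inequality $\norm{\operatorname{tr}(\delta_h U)}_{H^s(\R)} \lesssim \norm{\nabla (\delta_h U)}_{L^2_\alpha}$ combined with coercivity of $b$ and absorption yield $\norm{\eta \nabla(\delta_h U)}_{L^2_\alpha}^2 \leq C\abs{h}\, N^2(F,f)$.

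The remaining contribution from the boundary strip $\{x : \operatorname{dist}(x,\partial\Omega) \leq \abs{h}\} \times \Rpos$, where $\eta$ is not yet $1$, must be handled by exploiting $\operatorname{tr} U = 0$ on $\Omega^c$ together with Hardy-type estimates for $H^1_{\alpha,0}(\R \times \Rpos)$: the vanishing of $U$ at $\partial\Omega$ is traded for a bound of order $\abs{h}$ on the local energy. I expect the main technical obstacle to be precisely this boundary-layer analysis, since the Dirichlet condition on $\Omega^c$ is not translation invariant, so $\delta_h U$ itself is inadmissible and one is forced into a cutoff construction that does not reach $\partial\Omega$. This same non-translation-invariance is the structural origin of the sharp $t < 1/2$ threshold.
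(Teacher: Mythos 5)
Your interior argument (testing with $V_h=\delta_{-h}(\eta^2\delta_h U)$, commuting the difference operator through $b(\cdot,\cdot)$, gaining one factor of $|h|$ from each of the bulk and trace terms, and absorbing via coercivity and the trace inequality) is sound and is essentially the same computation the paper performs, packaged there through Savar\'e's abstract framework with the translation operator $T_hU=\eta U_h+(1-\eta)U$. The integration of the resulting bound $\|\eta\nabla\delta_hU\|^2_{L^2_\alpha}\lesssim|h|N^2(F,f)$ against $|h|^{-1-2t}\,dh$ is also the right way to reach $t<1/2$.

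However, there is a genuine gap exactly where you flag it: the boundary strip $\{x:\operatorname{dist}(x,\partial\Omega)\le|h|\}$ is not "a remaining technicality" but the crux, and the route you sketch for it does not work. To show that the local energy $\int_{S_{|h|}^+}y^\alpha|\nabla U|^2$ is $O(|h|)$ via a Hardy-type argument you would need $\|r^{-1/2}\nabla U\|_{L^2_\alpha(\Omega^+)}<\infty$ (or $r^{-t}$ with $t$ arbitrarily close to $1/2$), which is precisely the weighted regularity that Lemma~\ref{lem:estH1} later \emph{deduces from} Lemma~\ref{lem:regularity}; your argument is therefore circular at the endpoint you need. Moreover, vanishing of $\operatorname{tr}U$ on $\Omega^c$ gives Hardy control of $U$ itself, not of $\nabla U$, so it cannot by itself produce the required decay of the gradient energy near $\partial\Omega$. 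The paper avoids the boundary strip altogether by a different device: near each endpoint of the interval only \emph{one-sided} translations are used, with the sign of $h$ chosen so that $T_hU$ still vanishes on $\Omega^c$ and hence remains an admissible competitor in $H^1_{\alpha,0}(\R\times\Rpos)$; the resulting one-sided difference-quotient bounds are then upgraded to the full two-sided Aronstein--Slobodeckij seminorm by splitting the $h$-integral into $h>0$ and $h<0$ and applying the change of variables $x-h=x'$ in the second piece. That one-sided selection of admissible directions, together with the symmetrization step, is the missing idea your proposal would need to be completed.
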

\begin{proof}[Sketch]
The idea is to apply the difference quotient argument from \cite{Savare} only in the $x$-direction.
For $h \in \R$, denote 
$T_h U :=  \eta U_h + (1-\eta)U$, where $U_h(x,y) :=  U(x+h,y)$ and 
$\eta$ is a cut-off function that localizes to a fixed interval
$B_{2\rho}(x_0)$ in the first variable and that is constant in the second variable.

The main result of \cite{Savare} is that estimates for the modulus $\omega(U)$ defined 
by 
\begin{align*}
& \omega(U) :=\\
& \sup_{h \in D\backslash  \{0\}} \frac{b(T_h U,T_h U) - b(U,U)+\int_{\R \times \Rpos} F (T_h U - U)
   +\int_{\Omega}  f \operatorname{tr}( T_h U - U)}{\abs{h}} 
\end{align*}
can be used to derive regularity results in Besov spaces.
Here, $D \subset \R$ denotes a set of admissible directions $h$. 
These directions are chosen such that the function 
$T_h U$ is an admissible test function, i.e., 
$T_h U \in H^1_{\alpha,0}(\R \times \Rpos)$. 
%and 
%$\supp \operatorname{tr}(T_h U) \subset \overline{\Omega}$.
In the present case
this set can easily be characterized as $h \in [-\rho,\rho]$, 
if $B_{4\rho}(x_0) \subset \Omega$ 
or 
$\operatorname{dist}(B_{3\rho}(x_0),\Omega) \geq \rho$. 
In the other cases, we can take $h \in [0,\rho]$, if the right
endpoint of the interval $\Omega$ is in the intersection of $B_{4\rho}(x_0) \cap \Omega$ 
or 
$h \in [-\rho,0]$, if the left
endpoint of the interval $\Omega$ is in the intersection of $B_{4\rho}(x_0) \cap \Omega$.

{\bf Step 1.} (Estimate of $b(\cdot,\cdot)$-terms).
Using support properties of $\eta$ as well as the estimate 
$\norm{U(\cdot,y)-U_h(\cdot,y)}_{L^2(B_{2\rho})} \lesssim \abs{h} \norm{\nabla U(\cdot,y)}_{L^2(B_{3\rho})}$, 
one can deduce 
\begin{align*}
\left| b(T_h U,T_h U)-b(U,U) \right| \lesssim \abs{h} \int_{B_{3\rho}^+}y^\alpha \abs{\nabla U}^2 \, dx \, dy.
\end{align*}

{\bf Step 2.} (Estimate of $F$-integral).
Similarly to the first step, one can estimate
\begin{align*}
 \left| \int_{\R \times \Rpos} F (U-T_h U)\, dx \, dy\right| &\leq \norm{F}_{L^2_{-\alpha}(B_{2\rho}^+)}
 \norm{U-U_h}_{L^2_\alpha(B_{2\rho}^+)}  
 \\ &\lesssim \abs{h} \norm{F}_{L^2_{-\alpha}(B^+_{2\rho})} \norm{\nabla U}_{L^2_{\alpha}(B^+_{3\rho})}.
\end{align*}

{\bf Step 3.} (Estimate of $f$-integral).
With the trace inequality from \cite[Lem.~{3.3}]{KarMel19}, we obtain
\begin{align}\label{eq:SavTmp3}
\left|  \int_{\Omega} f \operatorname{tr}(U-T_h U)\, dx \right| \lesssim \abs{h} \norm{f}_{H^{1}(B_{4\rho})}\norm{\nabla U}_{L^2_\alpha(B_{4\rho}^+)}.
 \end{align}

{\bf Step 4.} (Application of the abstract framework of \cite{Savare}).
We introduce the seminorms $[U]^2:=  \int_{\R \times \Rpos} y^\alpha |\nabla U|^2\,dx dy$. By the 
coercivity of $b(\cdot,\cdot)$ on $H^1_{\alpha,0}(\R \times \Rpos)$ with respect to $[\cdot]^2$ and the abstract 
estimates in \cite[Sec.~{2}]{Savare}, we have 
\begin{align*}
[U-T_h U]^2  &\lesssim  \omega(U) |h| \\
& \lesssim
|h|\|\nabla U\|_{L^2_\alpha(B^+_{4\rho})}\left(
\|\nabla U\|_{L^2_\alpha(B^+_{3\rho})} + \|F\|_{L^2_{-\alpha}(B^+_{2\rho})} + \|f\|_{H^{1}(B_{4\rho})}
\right).
\end{align*}
Employing the {\sl a priori} estimate $
\|\nabla U\|_{L^2_{\alpha}(\R \times \Rpos)} 
\lesssim  \|F\|_{L^2_{-\alpha}(\R \times \Rpos)} + \|f\|_{H^{-s}(\Omega)}$ and
using $\eta \equiv 1$ on $B^+_{\rho}(x_0)$ leads to
\begin{align} 
\label{eq:local-10} 
\int_{B^+_\rho} y^\alpha |\nabla U - \nabla U_h|^2\, dx \, dy &\leq
[U - T_h U]^2 \leq |h| \; N^2(F,f).
\end{align} 
{\bf Step 5a:} ($H^t(\Omega)$--estimate).
Thus far, we only consider one sided difference quotients, i.e., $h \in D$ 
in (\ref{eq:local-10}). The 
restriction $h \in D$ in (\ref{eq:local-10}) 
can be lifted as shown in \cite{FMMS21}.
In the present 1D situation, the key observation is that 
when computing the Aronstein-Slobodecki norm, one can write  
for functions $v$ defined on $\R$
\begin{align*} 
\int_{x\in\R} \int_{|h| \leq h_0} \frac{|v(x+h) - v(x)|^2}{|h|^{1+2\sigma}}\,dh\,dx & = 
\int_{x\in\R} \int_{h=0}^{h_0} 
\frac{|v(x+h) - v(x)|^2}{h^{1+2\sigma}}\,dh\,dx \\
& +  
\int_{x\in\R} \int_{h=0}^{h_0} 
\frac{|v(x-h) - v(x)|^2}{h^{1+2\sigma}}\,dh\,dx,  
\end{align*} 
and a change of variables $x - h  = x'$ in the second integral leads again to a 
one-sided difference quotient. For simplicity of presentation, 
we will therefore assume in the following Step 5b that (\ref{eq:local-10})
holds for all $|h| \leq h_0$, and  we will assume that $B_\rho$ in (\ref{eq:local-10}) can be replaced by $\Omega$; this is possible by 
covering with suitable localizations and using (\ref{eq:local-10}).

{\bf Step 5b:} ($H^t(\Omega)$--estimate).
For $t < 1/2$ and $\widetilde{R}$ large enough (s.t. $\Omega \subset (-\widetilde{R},\widetilde{R})$), we estimate with the Aronstein-Slobodeckij seminorm 
\begin{align*}
\int_{\Rpos} |\nabla U(\cdot,y)|^2_{H^t(\Omega)}\,dy  & \leq 
\int_{\Rpos} \int_{\Omega} \int_{|h| \leq {\widetilde{R}}} \frac{|\nabla U(x+h,y) - \nabla U(x,y)|^2}{|h|^{1+2t}}\,dh\; dx\; dy.  
\end{align*}
The integral in $h$ is split into the range $|h| \leq \varepsilon$ for some fixed $\varepsilon>0$, for which 
\eqref{eq:local-10} can be brought to bear, and $\varepsilon < |h| < \widetilde{R}$, for which a triangle inequality can be used. This gives  the sought estimate. 
%\begin{align*}
%\int_{\Rpos} |\nabla U(\cdot,y)|^2_{H^t(\Omega)}\,dy &\lesssim N^2(F,f), 
%%\\
%%& \stackrel{(\ref{eq:energy-estimate})}{\lesssim} \frac{1}{1-2t} 
%%\left( \|F\|^2_{L^2_{-\alpha}(\R^d \times \Rpos)} + \|f\|^2_{H^{1-s}(\Omega)}\right), 
%\end{align*}
%which is the sought estimate. 
\end{proof}

\subsection{Interior regularity}
In the following, we are interested in Caccioppoli type estimates 
that allow to control higher order derivatives by lower order derivatives on slightly enlarged intervals.
\begin{lemma}\label{lem:CaccType} 
Let $B_R \subset \Omega$ be a ball of radius $R$
and $B_{cR}$ the concentrically scaled ball of radius $cR$. 
%Denote by $\operatorname{tr}(B_R$ the intersection with the x-axis. 
Let $U$ satisfy \eqref{eq:extension} with given data
$f$ and $F$. 
There is a constant $C_{\rm Cac}$ depending only on $\Omega$, $s$ 
such that 
for every $c\in (0,1)$
\begin{align}\label{eq:Caccioppoli}
 \norm{\nabla \partial_x U}_{L^2_\alpha(B_{cR}^+)} &\leq C_{\rm Cac} \Big( ((1-c)R)^{-1} \norm{\nabla U}_{L^2_\alpha(B_R^+)} \nonumber \\
 & \qquad\qquad+ \norm{\partial_x f}_{L^2(B_R)} + \norm{F}_{L^2_{-\alpha}(B_R^+)} \Big).
\end{align}
With a constant $\gamma>0$ depending only on $s$, $\Omega$, and $c$, it holds
for all $p \in \N$
\begin{align}\label{eq:HighOrderCaccioppoli}
\norm{\nabla \partial^p_x U}_{L^2_\alpha(B_{cR}^+)} \leq&\;  (\gamma p)^p  R^{-p}\norm{\nabla U}_{L^2_\alpha(B_R^+)} \\ \nonumber 
& + \sum_{i=1}^p (\gamma p)^{p-i} R^{i-p}\bigl(\norm{\partial_x^i f}_{L^2(B_R)} + \norm{\partial_x^{i-1} F}_{L^2_{-\alpha}(B_R^+)}\bigr). 
\end{align}
\end{lemma}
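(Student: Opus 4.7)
The plan is to prove the first-order estimate \eqref{eq:Caccioppoli} by a tangential Caccioppoli argument on the Caffarelli--Silvestre extension, and then obtain \eqref{eq:HighOrderCaccioppoli} by iterating this bound over a nested family of balls, picking up a factor of order $\gamma p$ per iteration.

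For \eqref{eq:Caccioppoli}, one differentiates the extension problem formally in the tangential variable $x$: the function $v := \partial_x U$ satisfies $-\operatorname{div}(y^\alpha \nabla v) = \partial_x F$ in $B_R \times \R_+$ with Neumann-type condition $\partial_{n_\alpha} v = \partial_x f$ on $B_R\subset\Omega$, and $\operatorname{tr} v = 0$ on $\Omega^c$. Choose a cutoff $\eta=\eta(x)$ with $\eta\equiv 1$ on $B_{cR}$, $\operatorname{supp}\eta\subset B_R$, and $\|\eta'\|_{L^\infty}\lesssim((1-c)R)^{-1}$. Since $\operatorname{supp}\eta\subset B_R\subset\Omega$, the test function $V := \eta^2 v$ lies in $H^1_{\alpha,0}(\R\times\R_+)$. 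Substituting into the weak formulation for $v$ and using $\nabla V = 2\eta\eta' v\,\hat{e}_x + \eta^2\nabla v$ gives the identity
\begin{equation*}
\int y^\alpha \eta^2 |\nabla v|^2 = -2\int y^\alpha \eta\eta' v\,\partial_x v + \int \partial_x F\cdot\eta^2 v + \int_{B_R}\partial_x f\cdot \eta^2\operatorname{tr} v\,dx.
\end{equation*}
The first RHS term is handled by Young's inequality: half is absorbed into $\int y^\alpha\eta^2|\nabla v|^2$ and the remainder is $\lesssim ((1-c)R)^{-2}\|\nabla U\|^2_{L^2_\alpha(B_R^+)}$. The $F$-integral is integrated by parts in $x$ to move $\partial_x$ off $F$, giving $-\int F\cdot(2\eta\eta' v + \eta^2\partial_x v)$, controlled via Cauchy--Schwarz by $\|F\|_{L^2_{-\alpha}(B_R^+)}$ times weighted $L^2_\alpha$-norms of $\eta\nabla v$ and $\eta' v$. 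For the $f$-boundary integral, the weighted trace inequality of \cite[Lem.~3.3]{KarMel19} controls $\|\eta^2\operatorname{tr} v\|_{L^2(B_R)}$ by $\|\eta^2 v\|_{L^2_\alpha(B_R^+)} + \|\nabla(\eta^2 v)\|_{L^2_\alpha(B_R^+)}$. A final Young's inequality with a sufficiently small parameter $\delta$ absorbs all $\eta\nabla v$-contributions into the coercive left-hand side, leaving \eqref{eq:Caccioppoli}.

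For \eqref{eq:HighOrderCaccioppoli}, fix $\tilde c\in(c,1)$ (e.g.\ $\tilde c = (1+c)/2$) and set $R_j := cR + j(\tilde c-c)R/p$ for $j=0,\ldots,p$, so that $R_0=cR$, $R_p=\tilde c R\leq R$, and $R_{j+1}-R_j = (\tilde c-c)R/p$. Since $\partial_x^k U$ solves the extension problem with data $\partial_x^k F$ and boundary datum $\partial_x^k f$, applying \eqref{eq:Caccioppoli} to $\partial_x^k U$ on the pair $B_{R_j}\subset B_{R_{j+1}}$ yields a gradient prefactor $(R_{j+1}-R_j)^{-1} = p/((\tilde c-c)R)$ and the one-step bound
\begin{equation*}
\|\nabla\partial_x^{k+1} U\|_{L^2_\alpha(B_{R_j}^+)} \leq C_{\rm Cac}\Bigl(\tfrac{p}{(\tilde c-c)R}\|\nabla\partial_x^k U\|_{L^2_\alpha(B_{R_{j+1}}^+)} + \|\partial_x^{k+1} f\|_{L^2(B_R)} + \|\partial_x^k F\|_{L^2_{-\alpha}(B_R^+)}\Bigr).
\end{equation*}
Iterating for $k=p-1,p-2,\ldots,0$ (equivalently $j=0,1,\ldots,p-1$) telescopes to \eqref{eq:HighOrderCaccioppoli} with $\gamma := 2C_{\rm Cac}/(\tilde c-c)$.

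The main technical obstacle is the treatment of the Neumann-type $f$-boundary term in the first-order Caccioppoli: unlike the standard Dirichlet Caccioppoli, one must control an $L^2(B_R)$-norm of the trace of $\partial_x U$ at $y=0$ by interior weighted $L^2_\alpha$-quantities in such a way that, after Young's inequality with a small splitting parameter, the only $\eta\nabla v$-contribution that appears can be absorbed into the coercive left-hand side. This delicate balance is achieved precisely by the weighted trace inequality of \cite{KarMel19} combined with an $\varepsilon$-Young splitting. A secondary, more routine issue is the rigorous justification of tangential differentiation of the PDE: on $B_R\subset\Omega$ this follows from classical interior regularity of the degenerate extension (which is uniformly elliptic for $y>0$) together with tangential difference-quotient arguments in the spirit of Lemma~\ref{lem:regularity}, with the final bound recovered by passing to the limit in the difference-quotient parameter.
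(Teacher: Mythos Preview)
Your proof is correct and follows essentially the same approach as the paper. The paper simply cites \cite[Lem.~3.4]{FMMS21} (obtained via difference quotients) for \eqref{eq:Caccioppoli}, while you unpack the underlying tangential Caccioppoli argument; for \eqref{eq:HighOrderCaccioppoli}, both proofs iterate \eqref{eq:Caccioppoli} over a nested family of $p$ concentric balls with equal radial increments of order $(1-c)R/p$, each step contributing a factor $\sim C_{\rm Cac}\,p/((1-c)R)$, which yields the stated bound with $\gamma \sim C_{\rm Cac}/(1-c)$.
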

\begin{proof}
Estimate \eqref{eq:Caccioppoli} follows from \cite[Lem.~3.4]{FMMS21}, which holds for any space dimension $d$, using difference quotient techniques as previously employed in \cite{FMP21}.

We now show \eqref{eq:HighOrderCaccioppoli}:
As the $x$-derivatives commute with the differential operator in \eqref{eq:extension}, we have that $\partial_x^i U$ solves 
\eqref{eq:extension} on $B^+_R$ with data $\partial_x^i F$ and $\partial_x^i f$ for any $i$.

For given $c>0$, we choose sets $B_{c_iR}$ with $c_i = c + (i-1)\frac{(1-c)}{p}$, which implies 
$c_i < c_{i+1} < 1$ for all $i < p$. Then, we have $c_{i+1}R-c_iR = \frac{(1-c)R}{p}$. Applying  
\eqref{eq:Caccioppoli} to $\partial^{p-1}_x U$ with the sets $B_{c_{1} R}, B_{c_{2} R}$ leads to the estimate
\begin{align*}
 \norm{\nabla  \partial_x^p U}_{L^2_\alpha(B_{c_1R}^+)} &\leq C_{\rm Cac} \Bigl(\frac{p}{(1-c)}R^{-1} \norm{\nabla \partial^{p-1}_x U}_{L^2_\alpha(B_{c_2 R}^+)} \\
&\qquad + \norm{\partial_x^p f}_{L^2(B_R)} + \norm{\partial^{p-1}_x F}_{L^2_{-\alpha}(B_R^+)} \Bigr). 
\end{align*}
Inductively applying \eqref{eq:Caccioppoli}
to control $\norm{\nabla \partial^{p-j+1}_x U}_{L^2_\alpha(B_{c_j R}^+)}$ for $2\leq j\leq p$ with sets $B_{c_{j} R}, B_{c_{j+1} R}$  
%and using $p^p \JMM{\leq} p!e^p$, 
provides the claimed estimate with $\gamma = C_{\rm Cac} /(1-c)$. 
%For the elementary calculation of the $p$-dependence of the constant, we refer to \cite[Lem.~5.5.12]{MelenkBuch}.
\end{proof}

The right-hand side of the Caccioppoli estimate (\ref{eq:Caccioppoli}) suggests that we need to control $R^{-1} \norm{\nabla U}_{L^2_{\alpha}(B_R^+)}$. 
This term is actually small for $R\rightarrow 0$ in the presence of regularity of $U$, which was asserted in 
Lemma~\ref{lem:regularity}.

\begin{lemma}\label{lem:estH1}
Let $ S_{R} :=  \{x \in \Omega \;\colon \; r(x) < R\}$.  
Let $U$ solve \eqref{eq:extension}.
Then, for $t \in [0,1/2)$, there exists $C_{\rm reg} > 0$ depending only on $t$ and $\Omega$ such that, 
with the constant $C_t>0$ from Lemma~\ref{lem:regularity} and $N^2(F,f)$ given by \eqref{eq:CUFf}, we have 
\begin{align}
\label{eq:lem:estH1-5}
R^{-2t} \|\nabla U\|^2_{L^2_{\alpha}(S^+_R)} 
\leq 
\|r^{-t} \nabla U \|^2_{L^2_{\alpha}(\Omega^+)} 
\leq 
C_{\rm reg} C_t N^2(F,f). 
\end{align}
\end{lemma}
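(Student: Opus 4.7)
The plan is to dispatch the two inequalities separately. The first one is a straightforward pointwise comparison: on $S_R$ we have by definition $r(x) < R$, so $R^{-2t} \le r(x)^{-2t}$ for $t \ge 0$. Multiplying by $y^\alpha |\nabla U(x,y)|^2$ and integrating over $S_R^+$, then enlarging the domain to $\Omega^+$, yields
\begin{align*}
R^{-2t}\|\nabla U\|_{L^2_\alpha(S_R^+)}^2 \leq \|r^{-t}\nabla U\|_{L^2_\alpha(S_R^+)}^2 \leq \|r^{-t}\nabla U\|_{L^2_\alpha(\Omega^+)}^2.
\end{align*}

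For the second (substantive) inequality, I would write the $L^2_\alpha$-norm as a $y$-integral of slice norms,
\begin{align*}
\|r^{-t}\nabla U\|_{L^2_\alpha(\Omega^+)}^2 = \int_{\Rpos} y^\alpha \int_{\Omega} r(x)^{-2t} |\nabla U(x,y)|^2\, dx\, dy,
\end{align*}
and then, for each fixed $y$, apply a one-dimensional Hardy inequality to the two scalar components $\partial_x U(\cdot,y)$ and $\partial_y U(\cdot,y)$. Precisely, since $t \in [0,1/2)$, for a function $v \in H^t(\Omega)$ on the interval $\Omega$ one has $\|v/r^t\|_{L^2(\Omega)} \lesssim \|v\|_{H^t(\Omega)}$ (this is the standard Hardy/Grisvard estimate; for $t<1/2$ the $\widetilde H^t$- and $H^t$-norms coincide, so no boundary values of $\nabla U(\cdot,y)$ are required). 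Applied componentwise this gives, with a constant depending only on $t$ and $\Omega$,
\begin{align*}
\int_\Omega r(x)^{-2t}|\nabla U(x,y)|^2\, dx \lesssim \|\nabla U(\cdot,y)\|_{H^t(\Omega)}^2.
\end{align*}

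Multiplying by $y^\alpha$, integrating over $\Rpos$, and invoking Lemma~\ref{lem:regularity} then gives
\begin{align*}
\|r^{-t}\nabla U\|_{L^2_\alpha(\Omega^+)}^2 \lesssim \int_{\Rpos} y^\alpha \|\nabla U(\cdot,y)\|_{H^t(\Omega)}^2\, dy \leq C_t N^2(F,f),
\end{align*}
which is the claim with a suitable $C_{\rm reg}$.

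The only delicate point is the sliced Hardy inequality and the fact that the constant must be uniform in $y$; this is ensured because the inequality depends only on $t$ and the geometry of $\Omega$, not on the particular slice function. The restriction $t<1/2$ is exactly what allows us to avoid imposing any boundary condition on $\nabla U(\cdot,y)$, and it is also the range in which Lemma~\ref{lem:regularity} delivers the requisite global $H^t$-regularity in $x$, so the two ingredients match perfectly.
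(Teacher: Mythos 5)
Your proposal is correct and follows essentially the same route as the paper: the first inequality by the pointwise bound $R^{-2t}\le r(x)^{-2t}$ on $S_R$, and the second by applying the Grisvard embedding $\|r^{-t}v\|_{L^2(\Omega)}\le C\|v\|_{H^t(\Omega)}$ (valid for $t<1/2$) slicewise in $y$ and then invoking Lemma~\ref{lem:regularity}. No gaps.
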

\begin{proof}
The first estimate in \eqref{eq:lem:estH1-5} is trivial. 
For the second bound, we start by noting that
Lemma~\ref{lem:regularity} provides the global regularity
%\begin{align}
%\label{eq:lem:estH12D-10}
$\displaystyle \int_{\Rpos} y^\alpha \norm{\nabla U(\cdot, y)}_{H^t(\Omega)}^2 dy \leq C_t N^2(F,f). 
$
%\end{align}
For $t \in [0,1/2)$ and any $v \in H^t(\Omega)$, we have by, e.g., \cite[Thm.~{1.4.4.3}]{Grisvard} the embedding result 
$\|r^{-t} v\|_{L^2(\Omega)} \leq C_{\mathrm{reg}}
\|v\|_{H^t(\Omega)}$. Applying this embedding to $\nabla U(\cdot,y)$,
multiplying by $y^\alpha$, and integrating in $y$ yields the claimed estimate. 
\end{proof}

With the Caccioppoli estimate, we obtain estimates for the derivatives.

\begin{lemma}\label{lem:summation}
Let $U$ solve \eqref{eq:extension} with data $f$, $F$ satisfying 
for some $C_f$, $C_F$, $\gamma >0$
\begin{align*}
\forall p \in \N_{0} \colon \quad 
\norm{\partial_x^p f}_{L^2(\Omega)} \leq C_f \gamma^p p^p, \qquad \norm{\partial_x^p F}_{L^2_{-\alpha}(\Omega^+)} \leq C_F \gamma^p p^p.
\end{align*}
Then, there is $\tilde \gamma > 0$ (depending on $\gamma$, $s$, $\Omega$) 
and, for every $\varepsilon \in (0,1)$, $t \in [0,1/2)$ a constant $C_\varepsilon$ 
(depending only on $\varepsilon$, $t$, $\Omega$) 
such that for all $p \in \N_{0}$, 
we have  
\begin{align*}
\norm{r^{p-t+\varepsilon} \nabla \partial_x^p  U}_{L^2_\alpha(\Omega^+)} \leq C_{\varepsilon}\tilde \gamma^p p! (C_{f} +C_F + N(F,f)).
\end{align*}
\end{lemma}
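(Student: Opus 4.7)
The plan is to localize the high-order Caccioppoli estimate \eqref{eq:HighOrderCaccioppoli} on a Whitney-type cover of $\Omega$ and combine it with the global regularity shift from Lemma~\ref{lem:estH1} to produce the claimed weighted bound. Concretely, fix $c_0 > 2$ and choose a countable cover $\{B_{R_j}(x_j)\}_j$ of $\Omega$ with $R_j := r(x_j)/c_0$ such that $B_{R_j}(x_j) \subset \Omega$, $r(x) \sim R_j$ on $B_{R_j}(x_j)$, the shrunk balls $\{B_{R_j/2}(x_j)\}$ already cover $\Omega$, and the enlarged family $\{B_{R_j}(x_j)\}$ has finite overlap with a multiplicity $M$ depending only on $\Omega$; in one space dimension such a cover is produced by a dyadic decomposition near each endpoint of $\Omega$.

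On each ball $B_{R_j}(x_j)$, apply \eqref{eq:HighOrderCaccioppoli} with $c=1/2$, multiply by $r^{p-t+\varepsilon}$, and use $r\sim R_j$ on $B_{R_j/2}(x_j)$ to obtain, with $\gamma_0$ denoting the constant from Lemma~\ref{lem:CaccType},
\begin{align*}
 \norm{r^{p-t+\varepsilon}\nabla \partial_x^p U}_{L^2_\alpha(B_{R_j/2}^+)}
 \lesssim\;& (\gamma_0 p)^p R_j^{-t+\varepsilon} \norm{\nabla U}_{L^2_\alpha(B_{R_j}^+)} \\
 &+ \sum_{i=1}^p (\gamma_0 p)^{p-i} R_j^{i-t+\varepsilon}\bigl(\norm{\partial_x^i f}_{L^2(B_{R_j})} + \norm{\partial_x^{i-1} F}_{L^2_{-\alpha}(B_{R_j}^+)}\bigr).
\end{align*}
Squaring and summing in $j$ with finite overlap, the leading term becomes $(\gamma_0 p)^p\, \bigl(\sum_j R_j^{-2(t-\varepsilon)}\norm{\nabla U}^2_{L^2_\alpha(B_{R_j}^+)}\bigr)^{1/2} \lesssim (\gamma_0 p)^p \norm{r^{-(t-\varepsilon)}\nabla U}_{L^2_\alpha(\Omega^+)} \lesssim C_\varepsilon (\gamma_0 p)^p N(F,f)$, where Lemma~\ref{lem:estH1} is invoked with exponent $t-\varepsilon \in [0,1/2)$ (if $t-\varepsilon \le 0$, the weight is bounded and the a priori estimate $\norm{\nabla U}_{L^2_\alpha(\Omega^+)} \lesssim N(F,f)$ is used instead). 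For the data terms, since $i-t+\varepsilon > 0$ for $i\ge 1$, the factors $R_j^{i-t+\varepsilon}$ are uniformly bounded by $\diam(\Omega)^{i-t+\varepsilon}$, so finite overlap yields $\bigl(\sum_j R_j^{2(i-t+\varepsilon)}\norm{\partial_x^i f}_{L^2(B_{R_j})}^2\bigr)^{1/2} \lesssim M^{1/2}\diam(\Omega)^{i-t+\varepsilon} C_f \gamma^i i^i$, and analogously for $F$.

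It remains to collect the terms: Minkowski in $i$ followed by the Stirling-type bound $p^p \le e^p p!$ and the elementary inequality $(i/p)^i\le 1$ give
\begin{align*}
 \sum_{i=1}^p (\gamma_0 p)^{p-i}\gamma^i i^i = \gamma^p \sum_{i=1}^p (\gamma_0/\gamma)^{p-i} p^{p-i} i^i \le \gamma^p e^p p! \sum_{i=1}^p (\gamma_0/\gamma)^{p-i}(i/p)^i,
\end{align*}
which is a geometric series in $i$ and is therefore bounded by $\tilde\gamma^p p!$ after absorbing the factor $p\le 2^p$ into the base. The main obstacle is precisely this combinatorial bookkeeping: the Caccioppoli factor $(\gamma_0 p)^{p-i}$ and the data growth $\gamma^i i^i$ must be balanced to collapse into a single $\tilde\gamma^p p!$ whose base $\tilde\gamma$ depends only on $\gamma,s,\Omega$ (and, in particular, does not degrade as $\varepsilon\downarrow 0$). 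A secondary technical point is ensuring that the Whitney cover has overlap multiplicity $M$ and the implicit constants in $r\sim R_j$ independent of $p$, so that the telescoping over the cover contributes only fixed constants.
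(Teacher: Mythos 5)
Your proposal is correct and follows essentially the same route as the paper: localize the high-order Caccioppoli estimate \eqref{eq:HighOrderCaccioppoli} on a dyadic/Whitney cover graded towards $\partial\Omega$, control the leading term via Lemma~\ref{lem:estH1}, and collapse the $i$-sum with $p^p\le e^p p!$. The only (harmless) difference is that you make the sum over the cover converge via finite overlap, whereas the paper uses balls $B_{cr_i}(x_i)$ along the dyadic points and exploits $\sum_i r_i^{2\varepsilon}<\infty$; both close the argument, and your constants depend on $\varepsilon,t,\Omega$ exactly as the statement allows.
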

\begin{proof}
The case $p = 0$ follows immediately from Lemma~\ref{lem:estH1}.
For $p \ge 1$ and for any ball $B_R\subset \Omega$ and $c>0$, 
Lemma~\ref{lem:CaccType} combined with Lemma~\ref{lem:estH1} gives 
\begin{align}
\label{eq:lem:summation-10}
\norm{\nabla \partial_x^p U}_{L^2_\alpha(B_{cR}^+)} &\lesssim \gamma^p p^p R^{t-p}N(F,f) \\ 
\nonumber 
&\quad + 
\sum_{i=1}^p \gamma^{p-i}  p^{p-i} R^{i-p} \Bigl( \norm{\partial_x^i f}_{L^2(B_R)}+ \norm{\partial_x^{i-1} F}_{L^2_{-\alpha}(B_R^+)}\Bigr). 
\end{align}
In order to obtain an estimate for the $L^2$-norm over $\Omega = (a,b)$, 
we dyadically cover $\Omega \subset \bigcup_{i \in \N} B_{c r_i} (x_i)$ 
with intervals $B_{c r_i} (x_i) \subset \Omega$, 
where the points $x_i$ run through 
the set $\{a+(b-a)2^{-j}\colon j \in \N\} \cup \{b - (b-a) 2^{-j}\colon j \in \N\},$ 
$c \in (1/2,1)$ is fixed and $r_i = \dist(x_i,\partial\Omega)$. A geometric
series argument gives $\sum_i r_i^\varepsilon  <\infty$ for any 
chosen $\varepsilon$.  
Using the assumption on the data $f$, $F$ and (\ref{eq:lem:summation-10}), we obtain using $t < 1/2$
\begin{align}
\nonumber 
\norm{r^{p}\nabla \partial_x^p U}_{L^2_\alpha(B_{cr_i}(x_i)^+)} & \lesssim r_i^p\norm{\nabla \partial_x^p U}_{L^2_\alpha(B_{cr_i}(x_i)^+)}   \\
\label{eq:estD2}
& \lesssim \tilde \gamma^p p^p r_i^t(C_f+C_F+N(F,f)) 
\end{align}
for suitable $\tilde \gamma$.
%Moreover, the points $x_i$ and radii $r_i$ can be chosen such that the sum $\sum_i r_i^\varepsilon < \infty$ for any $\varepsilon$.
Therefore, we have for fixed $\varepsilon>0$
\begin{align*}
& \norm{r^{p-t+\varepsilon}(\nabla \partial_x^p U)}_{L^2(\Omega^+)}^2 \lesssim 
 \sum_{i \in \N} r_i^{2p-2t+2\varepsilon} \norm{\nabla \partial_x^p U}_{L^2(B_{c r_i} (x_i)^+)}^2 \\
 &\quad \stackrel{\eqref{eq:estD2}}{\lesssim} \tilde \gamma^{2p} p^{2p}
 \sum_{i \in \N} r_i^{2\varepsilon} (C_f + C_{F} +N(F,f))^2 
 = C_{\varepsilon}^2\tilde \gamma^{2p} p^{2p} (C_f + C_{F}+N(F,f))^2,
\end{align*}
which finishes the proof after noting $p^p \leq p! e^p$ and adjusting $\tilde \gamma$. 
\end{proof}

Taking traces, we obtain weighted analytic estimates for the fractional Laplacian:
\begin{proof}[of Theorem~\ref{thm:analytic-regularity}]
%The last step of the proof of \cite[Lem.~3.7]{KarMel19} gives
\cite[Lem.~3.7]{KarMel19} gives
\begin{align*}
 \abs{V(x,0)}^2 \lesssim \norm{V(x,\cdot)}_{L^2_\alpha(\R_+)}^{1-\alpha}\norm{\partial_y V(x,\cdot)}_{L^2_\alpha(\R_+)}^{1+\alpha}
+\norm{V(x,\cdot)}^2_{L^2_\alpha(\R_+)}.
\end{align*}
Using this trace estimate with $V = \partial^p_x U$, additionally multiplying with $r^{2p-1-2s+2\varepsilon}$, and using $\alpha=1-2s$ provides 
\begin{align*}
  & r^{2p-1-2s+2\varepsilon}\abs{\partial_x^pU(x,0)}^2  \lesssim 
\|r^{p-1/2-s+\varepsilon} \partial^p_x U(x,\cdot)\|^2_{L^2_\alpha(\Rpos)} \\
&\qquad \qquad + \norm{r^{p-3/2+\varepsilon}\partial_x^pU(x,\cdot)}_{L^2_\alpha(\R_+)}^{1-\alpha}
 \norm{r^{p-1/2+\varepsilon}\partial_y \partial_x^p U(x,\cdot)}_{L^2_\alpha(\R_+)}^{1+\alpha}. 
\end{align*}
Integration over $\Omega$ gives in view of $u(x) = U(x,0)$ 
\begin{align*}
& \norm{r^{p-1/2-s+\varepsilon} \partial_x^p  u}_{L^2(\Omega)}^2 
\lesssim 
\|r^{p-1/2-s+\varepsilon} \partial^p_x U(x,\cdot)\|^2_{L^2_\alpha(\Omega^+)} \\
& \qquad \qquad + \norm{r^{p-3/2+\varepsilon}\partial_x^p U}_{L^2_\alpha(\Omega^+)}^{1-\alpha}
\norm{r^{p-1/2+\varepsilon}\partial_y \partial_x^p U }_{L^2_\alpha(\Omega^+)}^{1+\alpha}.
\end{align*}
Note that for $p \ge 1$, we have $|\partial^p_x U| \leq |\nabla \partial^{p-1}_x U|$ and  
 $|\partial_y \partial^p_x U| \leq |\nabla \partial^{p}_x U|$.  
Applying Lemma~\ref{lem:summation} with $t=1/2-\varepsilon/2$ and $\varepsilon/2$ instead of $\varepsilon$ therein 
for the two terms with weights $r^{p-3/2+\varepsilon}$ and $r^{p-1/2+\varepsilon}$ and 
$t = \max(0,s-1/2)$ for the term with the weight $r^{p-1/2-s+\varepsilon}$
provides the desired estimate. 

The statement $u \in {\mathcal B}^1_\beta$ follows by definition. The assertion $u \in C(\overline{\Omega})$ is implied by 
the observation $u \in C^\infty(\Omega)$ together with $u \in L^2(\Omega)$ and $r^{-1/2-s+\varepsilon} u^\prime  \in L^2(\Omega)$. 
\end{proof}

\subsection{Exponential convergence of $hp$-FEM}
\label{sec:ExConvhp}
For $\beta' \in [0,1)$, it is convenient to introduce the norm $\|\cdot\|_{H^1_{\beta'}(\Omega)}$ by 
$$
\|v\|^2_{H^1_{\beta'}(\Omega)}:= \|r^{\beta'} v^\prime\|^2_{L^2(\Omega)} + \|r^{\beta'-1} v\|^2_{L^2(\Omega)}. 
$$

\begin{lemma} 
\label{lemma:weighted-embedding}
Let $\beta' \in [0,1)$, $\sigma \in (0,1-\beta')$. 
Then, there is $C_{\beta',\sigma}> 0$ such that 
\begin{equation}
\|v\|_{\widetilde{H}^\sigma(\Omega)} \leq C_{\beta',\sigma} \left[ \|r^{\beta'} v^\prime\|_{L^2(\Omega)} + \|r^{\beta'-1} v\|_{L^2(\Omega)}\right] 
\end{equation}
for all $v$ such that right-hand side is finite. 
\end{lemma}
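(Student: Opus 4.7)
The plan is to bound the $L^2$-parts and the Gagliardo seminorm appearing in $\|\cdot\|_{\widetilde{H}^\sigma(\Omega)}$ separately, exploiting throughout that the hypothesis $\sigma<1-\beta'$ makes the relevant $r$-weights summable. The first step is elementary: since $\Omega$ is bounded and $1-\sigma-\beta'>0$, the pointwise identity $r^{-\sigma}v=r^{1-\sigma-\beta'}\cdot r^{\beta'-1}v$ combined with $\sup_\Omega r<\infty$ yields
$$
\|v\|_{L^2(\Omega)} + \|r^{-\sigma}v\|_{L^2(\Omega)} \lesssim \|r^{\beta'-1}v\|_{L^2(\Omega)},
$$
which handles the $L^2(\Omega)$-contribution and the weighted term $\|v/r^\sigma\|_{L^2(\Omega)}$ entering the definition of the $\widetilde H^\sigma(\Omega)$-norm.

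For the Gagliardo seminorm $|v|_{H^\sigma(\Omega)}^2$, I introduce a dyadic cover of $\Omega$ near each endpoint by intervals $\omega_j$ of length $h_j\sim 2^{-j}$ on which $r(x)\sim h_j$, and split the defining double integral into a \emph{local} contribution over pairs $(x,z)\in \omega_j\times\omega_k$ with $|j-k|\le 1$ and a \emph{nonlocal} one with $|j-k|\ge 2$. The local part is treated by the standard scaling estimate $|w|_{H^\sigma(I)}^2\lesssim h^{2(1-\sigma)}\|w'\|_{L^2(I)}^2+h^{-2\sigma}\|w\|_{L^2(I)}^2$ on an interval $I$ of length $h$, pulled back from the unit interval. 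Applied on $\omega_j$ and using $r\sim h_j$, this gives
$$
|v|_{H^\sigma(\omega_j)}^2 \lesssim h_j^{2(1-\sigma-\beta')}\|r^{\beta'}v'\|_{L^2(\omega_j)}^2 + \|r^{-\sigma}v\|_{L^2(\omega_j)}^2,
$$
and since $1-\sigma-\beta'>0$ the prefactor is uniformly bounded in $j$. Summation delivers the bound $\|r^{\beta'}v'\|_{L^2(\Omega)}^2+\|r^{-\sigma}v\|_{L^2(\Omega)}^2$; neighbouring pairs ($|j-k|=1$) are absorbed by applying the same scaling estimate on the unions $\omega_j\cup\omega_{j\pm 1}$.

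For the nonlocal part, the geometric observation $|x-z|\gtrsim \max(r(x),r(z))$ whenever $|j-k|\ge 2$, combined with $|v(x)-v(z)|^2\le 2|v(x)|^2+2|v(z)|^2$ and the elementary estimate $\int_{|h|\ge c\,r(x)}|h|^{-1-2\sigma}\,dh\lesssim r(x)^{-2\sigma}$, reduces this contribution to a multiple of $\|r^{-\sigma}v\|_{L^2(\Omega)}^2$, which is already controlled by the first step. Combining the three estimates gives the asserted embedding.

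The main obstacle I anticipate is the range $\sigma\in[1/2,1-\beta')$ (available only when $\beta'<1/2$), where the naive extension of $v$ by zero to $\R$ need not be $H^\sigma$-regular and an appeal to a trace/extension argument is delicate. The dyadic route sidesteps this entirely because the required vanishing of $v$ at $\partial\Omega$ is already encoded in the term $\|r^{-\sigma}v\|_{L^2(\Omega)}$; the remaining bookkeeping is ensuring that the scaling constants are uniform across the dyadic pieces and their overlaps, which is standard.
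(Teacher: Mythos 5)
Your proof is correct, but it follows a genuinely different route from the one in the paper. The paper argues by real interpolation: it estimates the $K$-functional for the pair $(L^2(\Omega),H^1_0(\Omega))$ via a cut-off $\chi_t$ vanishing on the strip $S_{t/2}$, obtains $K(v,t)\le C t^{1-\beta'}\|v\|_{H^1_{\beta'}(\Omega)}$, hence $v\in \widetilde B^{1-\beta'}_{2,\infty}(\Omega)=(L^2,H^1_0)_{1-\beta',\infty}$, and concludes by the inclusion $(L^2,H^1_0)_{\sigma,2}\subset (L^2,H^1_0)_{1-\beta',\infty}$ for $\sigma<1-\beta'$ together with the identification $\widetilde H^\sigma(\Omega)=(L^2,H^1_0)_{\sigma,2}$. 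You instead attack the Gagliardo seminorm directly with a dyadic decomposition into boundary shells, using the scaled local estimate $|w|^2_{H^\sigma(I)}\lesssim h^{2(1-\sigma)}\|w'\|^2_{L^2(I)}+h^{-2\sigma}\|w\|^2_{L^2(I)}$ on near-diagonal blocks and the kernel bound $\int_{|h|\ge c\,r(x)}|h|^{-1-2\sigma}\,dh\lesssim r(x)^{-2\sigma}$ on far-off-diagonal blocks; the hypothesis $\sigma<1-\beta'$ enters only through the uniform boundedness of $h_j^{2(1-\sigma-\beta')}$ and through $\|r^{-\sigma}v\|_{L^2}\lesssim\|r^{\beta'-1}v\|_{L^2}$. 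All the individual steps check out (including the geometric fact $|x-z|\gtrsim\max(r(x),r(z))$ for shells separated by at least two dyadic levels, and the observation that the same far-field estimate covers the interaction with $\Omega^c$, so the zero extension is genuinely in $H^\sigma(\R)$ even for $\sigma\ge 1/2$). What each approach buys: yours is elementary and self-contained, needing neither the interpolation identity for $\widetilde H^\sigma$ nor the truncated-$K$-functional norm equivalence, and it controls exactly the norm $\|v\|^2_{H^\sigma(\Omega)}+\|v/r^\sigma\|^2_{L^2(\Omega)}$ as defined in the paper; the paper's argument is shorter once the interpolation machinery is granted and yields the slightly stronger intermediate statement $v\in\widetilde B^{1-\beta'}_{2,\infty}(\Omega)$, from which the embedding follows for every $\sigma<1-\beta'$ at once. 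The only bookkeeping you should make explicit in a final write-up is the finitely many central elements where $r\sim 1$ and the uniformity of the constants over the overlapping unions $\omega_j\cup\omega_{j\pm1}$, both of which are routine as you note.
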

\begin{proof} 
For two continuously embedded Banach spaces $X_1 \subset X_0$ 
and for $v\in X_0$, $t>0$, we define the $K$-functional by 
$K(v,t):= \inf_{w \in X_1} \|v - w\|_{X_0} + t\|w\|_{X_1}$. 
For $\theta \in (0,1)$ and fine index $q\in [1,\infty]$,
the interpolation spaces (e.g. \cite[Lecture 22]{tartar07})
$X_{\theta,q}:= (X_0,X_1)_{\theta,q}$ 
are given by the norm
$$
\|v\|^q_{X_{\theta,q}}
:= 
\int_{t=0}^\infty \left(t^{-\theta} K( v,t)\right)^q\frac{dt}{t}, \; q \in [1,\infty), 
\quad 
\|v\|_{X_{\theta,\infty}}:= \sup_{t \in( 0, \infty)} t^{-\theta} K(v,t).   
$$
We use the fact that, since $X_1 \subset X_0$, 
replacing the integration and the supremum limit $\infty$ by a finite number $T$ 
leads to an equivalent norm, \cite[Chap.~6, Sec.~7]{devore93}. 
Let $S_t:=\{x \in \Omega\,|\, r(x) < t\}$
denote a $t$-neighborhood of $\partial\Omega$.  
For each $t > 0$ sufficiently small, we may choose $\chi_t \in C^\infty_0(\R)$ such that 
$\chi_t \equiv 0$ on $S_{t/2}$
and 
$\chi_t \equiv 1$ on $\Omega \setminus S_t$ as well as 
$\|\chi_t^{(j)}\|_{L^\infty(\R)} \leq C t^{-j}$, $j \in \{0,1\}$. 
Decomposing $v = \chi_t v +  (1-\chi_t) v$, we have $\chi_t v \in H^1_0(\Omega)$ 
and $(1 - \chi_t) v \in L^2(\Omega)$. 
A calculation reveals 
\begin{align*}
\|(\chi_t v)^\prime\|_{L^2(\Omega)} & 
%\leq C t^{-\beta'} \left[ \|r^{\beta'-1} v\|_{L^2(\Omega)} + \|r^{\beta'} v^\prime\|_{L^2(\Omega)}\right], \\ 
\leq C t^{-\beta'} \|v\|_{H^1_{\beta'}(\Omega)}, \\
\|(1-\chi_t) v\|_{L^2(\Omega)} & \leq C \|v\|_{L^2(S_t)} \leq t^{1-\beta'} \|r^{\beta'-1} v\|_{L^2(\Omega)}. 
\end{align*}
This implies for $X_0 = L^2(\Omega)$, $X_1 = H^1_0(\Omega)$ that 
$K(v,t) \leq C t^{1-\beta'} \|v\|_{H^1_{\beta'}(\Omega)}$. For 
the Besov space $\widetilde B^{1-\beta'}_{2,\infty}(\Omega) := (L^2(\Omega), H^1_0(\Omega) )_{1-\beta',\infty}$, we obtain  
$\|v\|_{\widetilde{B}^{1-\beta'}_{2,\infty}(\Omega)} \leq C \|v\|_{H^1_{\beta'}(\Omega)}$. 
%Since $\sigma < 1-\beta'$, we get from the continuous embedding 
%$\widetilde H^\sigma(\Omega) \stackrel{\scriptsize\cite{tartar07}}{=}
%(L^2(\Omega), H^1_0(\Omega))_{\sigma,2} \subset 
%(L^2(\Omega), H^1_0(\Omega))_{1-\beta',\infty}  = \widetilde{B}^{1-\beta'}_{2,\infty}(\Omega)$ the result. 
We conclude the proof by noting 
\begin{equation*}
\widetilde H^\sigma(\Omega) \stackrel{\scriptsize\cite{tartar07}}{=}
(L^2(\Omega), H^1_0(\Omega))_{\sigma,2} \stackrel{\sigma < 1-\beta'}{\subset }
(L^2(\Omega), H^1_0(\Omega))_{1-\beta',\infty}  = \widetilde{B}^{1-\beta'}_{2,\infty}(\Omega). 
\tag*{\hbox{\rlap{$\sqcap$}$\sqcup$}} %dirty hack! 
\end{equation*}
\end{proof}

\begin{lemma}
\label{lemma:approx-on-small-element} 
Let $\beta' \in [0,1)$, $\varepsilon > 0$. 
Then, there is $C_{\beta',\varepsilon} > 0$ such that the following holds: 
For $\widehat{v} \in C([0,1])$ let $I\widehat{v}$ be the linear interpolant in the endpoints $0$, $1$. Then, provided
the right-hand side is finite, there holds for $e:= \widehat{v} - I \widehat{v}$ 
\begin{align*}
%\|x^{\beta'} (\widehat{v} - I \widehat{v})^\prime\|_{L^2(0,1)} + 
%\|x^{\beta'-1} (\widehat{v} - I \widehat{v})\|_{L^2(0,1)} 
\|x^{\beta'-1} e\|_{L^2(0,1)} + 
\|x^{\beta'} e^\prime\|_{L^2(0,1)} 
\leq C_{\beta',\varepsilon} \|x^{\min\{\beta' + 1, 3/2-\varepsilon\}} \widehat{v}^{\prime\prime}\|_{L^2(0,1)}.  
\end{align*}
\end{lemma}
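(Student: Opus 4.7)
The starting point is the observation that $I\widehat v$ is linear, so $e := \widehat v - I\widehat v$ satisfies $e(0) = e(1) = 0$ and $e'' = \widehat v''$. Hence the assertion reduces to a weighted Hardy-type inequality for $H^2$-functions that vanish at both endpoints of $(0,1)$: with $\gamma := \min\{\beta'+1,3/2-\varepsilon\}$,
\[
\|x^{\beta'-1} e\|_{L^2(0,1)} + \|x^{\beta'} e'\|_{L^2(0,1)} \le C_{\beta',\varepsilon}\, \|x^{\gamma} e''\|_{L^2(0,1)}.
\]

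I would prove this via the explicit Green's function for the Dirichlet problem $u'' = f$ on $(0,1)$ with $u(0)=u(1)=0$, which yields
\[
e(x) = -\!\int_0^x t(1-x)\, e''(t)\, dt - \int_x^1 x(1-t)\, e''(t)\, dt, \qquad
e'(x) = \int_0^x t\, e''(t)\, dt - \int_x^1 (1-t)\, e''(t)\, dt.
\]
Each of the two norms on the left is then the image of a concrete kernel operator acting on $e'' = \widehat v''$, and the question becomes the two-weight $L^2$-boundedness of these operators. I would establish such boundedness by Schur's test with a power trial function $h(t)=t^a$ for a case-dependent exponent $a$ (equivalently, by weighted Cauchy–Schwarz followed by Fubini).

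The argument splits according to which of the two quantities realizes the minimum $\gamma$.

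\textbf{Case A:} $\beta' \le 1/2-\varepsilon$, so $\gamma = \beta'+1$. Here $\beta' < 1/2$, and the Schur range for the $e'$-operator turns out to be $a \in (\beta'-1,-\beta']$, which is nonempty precisely under that restriction on $\beta'$; it yields $\|x^{\beta'}e'\|_{L^2}\le C\,\|x^{\beta'+1}\widehat v''\|_{L^2}$. The classical Hardy inequality on $(0,1)$ applied to $e$ with $e(0)=0$, valid for $\beta'<1/2$, then gives $\|x^{\beta'-1}e\|_{L^2}\le \tfrac{2}{1-2\beta'}\|x^{\beta'}e'\|_{L^2}$.

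\textbf{Case B:} $\beta' > 1/2-\varepsilon$, so $\gamma = 3/2-\varepsilon$. The bound on $\|x^{\beta'}e'\|_{L^2}$ again follows from Schur's test, now with $a$ chosen near $-1/2$; the two Schur conditions on $a$ become simultaneously compatible exactly thanks to $\varepsilon>0$. For the $\|x^{\beta'-1}e\|_{L^2}$-term, where $\beta'$ may exceed $1/2$ and classical Hardy no longer applies, I would combine the fundamental theorem $e(x)=\int_0^x e'(s)\,ds$ with weighted Cauchy–Schwarz and the just-established bound on $\|x^{\beta'}e'\|_{L^2}$ (alternatively, apply Schur directly to the kernel for $e$).

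The main obstacle lies in the interaction of two critical thresholds: $\beta'=1/2$ (where the classical Hardy inequality degenerates) and $\gamma = 3/2$ (where the Fubini integrals underlying Schur's test diverge logarithmically). Writing out the Schur conditions one sees that the exponent ranges for the two power weights collapse exactly at the point where $\beta'+1=3/2$. The $\min$ with $3/2-\varepsilon$ in the hypothesis is the precise mechanism that prevents both critical values from being attained simultaneously, and the $\varepsilon$-loss propagates into the $\varepsilon$-dependence of $C_{\beta',\varepsilon}$.
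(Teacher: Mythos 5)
Your argument is correct in substance but reaches the key estimate by a genuinely different mechanism than the paper. The paper never writes down the Green's function: it introduces an auxiliary linear interpolant $\widetilde{\pi}_1\widehat v$ at the points $1/2$ and $1$, imports the one-weight estimate $\|x^{\alpha/2}(\widehat v-\widetilde{\pi}_1\widehat v)'\|_{L^2}\leq C\|x^{\alpha/2+1}\widehat v''\|_{L^2}$ from an earlier work, derives from it a maximum-norm bound on $\widehat v-\widetilde{\pi}_1\widehat v$ (this is where the cap $3/2-\varepsilon$ enters there, via the finiteness of $\int_0^1 t^{-1+2\varepsilon}\,dt$), and then writes $e=(\widehat v-w)-I(\widehat v-w)$ with $w=\widetilde{\pi}_1\widehat v$, controlling the term $I(\widehat v-w)$ by equivalence of norms on the finite-dimensional space ${\mathcal P}_1$. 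Your representation $e(x)=-\int_0^1 G(x,t)\,\widehat v''(t)\,dt$ combined with Schur's test on power weights replaces all of that by a self-contained two-weight computation; the price is that you must justify the representation under the weak hypothesis on $\widehat v''$ (which works: $t(1-t)\widehat v''\in L^1(0,1)$ follows precisely from $\gamma<3/2$, so the $\varepsilon$ plays the same role in both proofs) and verify four Schur conditions per case, whose admissible ranges for the exponent $a$ are indeed nonempty exactly under your case hypotheses. The subsequent treatment of $\|x^{\beta'-1}e\|_{L^2}$ — classical Hardy for $\beta'<1/2$, the representation $e(x)=\int_0^x e'(s)\,ds$ with weighted Cauchy--Schwarz otherwise — coincides with the paper's Steps~2--4. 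One imprecision in your Case B: for $\beta'\geq 1/2$ you cannot apply weighted Cauchy--Schwarz with the weight $s^{\beta'}$ itself, since $\int_0^x s^{-2\beta'}\,ds=\infty$; you must instead use $\|x^{\alpha}e'\|_{L^2}$ for some $\alpha<1/2$, chosen with $\alpha\geq 1/2-\varepsilon$ and $\alpha<\min\{\beta',1/2\}$ so that $\min\{\alpha+1,3/2-\varepsilon\}=3/2-\varepsilon$ still produces the correct weight on $\widehat v''$ — this is exactly the paper's Step~3 — or, as you note parenthetically, run Schur's test directly on the kernel representing $e$. With that repaired, your proof is complete and arguably more elementary, at the cost of longer explicit computations.
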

\begin{proof}
\emph{Step 1:}
Let $\widetilde \pi_1 \widehat v \in {\mathcal P}_1$ be the linear interpolant of $\widehat v$ in the points $1/2$ and $1$. From 
\cite[Lemma~{15}, (A.4)]{banjai-melenk-nochetto-otarola-salgado-schwab19}, we get for any $\alpha >-1$ 
\begin{align}
\label{eq:focm-lemma15}
\|x^{\alpha/2} (\widehat v - \widetilde{\pi}_1 \widehat v)^\prime\|_{L^2(0,1)} \leq C_{\alpha}  \|x^{\alpha/2+1} \widehat v^{\prime\prime}\|_{L^2(0,1)}. 
\end{align}
A maximum norm estimate is obtained from 
\begin{align*}
& |(\widehat v - \widetilde{\pi}_1 \widehat v)(x)|   \leq  \int_{x}^1 |(\widehat v - \widetilde{\pi}_1 \widehat v)^\prime(t)|\,dt  \\
& \leq \sqrt{ \int_{0}^1 t^{-1+2 \varepsilon} \, dt \int_0^1 t^{1-2\varepsilon} |(\widehat v - \widetilde{\pi}_1 \widehat v)^\prime(t)|^2\,dt} 
\stackrel{(\ref{eq:focm-lemma15})}{\leq} C_\varepsilon \|x^{3/2-\varepsilon} \widehat v^{\prime\prime}\|_{L^2(0,1)}. 
\end{align*}

\emph{Step 2 ($\beta' < 1/2$):} Abbreviate $e:= \widehat v - I \widehat v$ and note $e(x) = \int_0^x e^\prime(t)\,dt$. 
For $\beta' \in [0,1/2)$, Hardy's inequality \cite[Chap.~2, Thm.~{3.1}]{devore93} is applicable and yields 
$$
\|x^{\beta'-1} e\|_{L^2(0,1)} \leq C \|x^{\beta'} e^\prime\|_{L^2(0,1)}. 
$$
We estimate with $w   = \widetilde{\pi}_1 \widehat v \in {\mathcal P}_1$ 
\begin{align*}
\|x^{\beta'} e^\prime\|_{L^2(0,1)} &\leq 
\|x^{\beta'} (\widehat v - w)^\prime \|_{L^2(0,1)} + 
\|x^{\beta'} (I(\widehat v - w))^\prime \|_{L^2(0,1)} \\
&\stackrel{\text{${\mathcal P}_1$ finite dimensional}}{\lesssim} 
\|x^{\beta'} (\widehat v - w)^\prime \|_{L^2(0,1)} + \|\widehat v - w\|_{L^\infty(0,1)} \\
&\stackrel{\text{{Step~1}, $w = \widetilde{\pi}_1 \widehat v$}}{\lesssim} 
\|x^{\min\{\beta'+1, 3/2-\varepsilon\}} \widehat v^{\prime\prime}\|_{L^2(0,1)}. 
\end{align*}
\emph{Step 3 ($\beta' >  1/2$):} From the representation $e(x) = \int_0^x e^\prime(t)\,dt$ we get 
for any $\alpha \in [0,1/2)$ by the Cauchy-Schwarz inequality 
$ |e(x)| \leq C_\alpha x^{1/2-\alpha} \|x^\alpha e^\prime\|_{L^2(0,1)}$. Hence, for $\alpha$ sufficiently close to $1/2$,
$$
\|x^{\beta'-1} e\|_{L^2(0,1)} \lesssim \sqrt{\int_0^1 x^{2\beta'-2+1-2\alpha}\,dx} \; \|x^\alpha e^\prime\|_{L^2(0,1)} 
\lesssim \|x^\alpha e^\prime\|_{L^2(0,1)}. 
$$
We conclude, since $\alpha < 1/2 < \beta'$, that 
$\|x^{\beta'-1} e\|_{L^2(0,1)} + \|x^{\beta'} e^\prime\|_{L^2(0,1)} \lesssim \|x^\alpha e^\prime\|_{L^2(0,1)}$.  
Applying Step~2 with $\alpha$ taking the role of $\beta'$ there, we get, by selecting $\alpha$ sufficiently close to $1/2$ 
\begin{align*}
\|x^\alpha e^\prime\|_{L^2(0,1)} \lesssim \|x^{3/2-\varepsilon} u^{\prime\prime}\|_{L^2(0,1)}. 
\end{align*}
\emph{Step 4 ($\beta' = 1/2$):} Given $\varepsilon>0$, we note for $\varepsilon'>0$ sufficiently small that 
$\min\{\beta'+1,3/2-\varepsilon\} = 3/2-\varepsilon = \min\{\beta'-\varepsilon' +1,3/2-\varepsilon\}$ so that 
we obtain the result by applying Step~2 to the choice $\beta' = 1/2-\varepsilon'$. 
\end{proof}
\begin{proof}[of Theorem~\ref{thm:hp-approx}]
Since the bilinear form $a(\cdot,\cdot)$ is elliptic on $\widetilde{H}^s(\Omega)$, 
by C\'ea's lemma it suffices to construct a function 
$v \in S^{p,1}_0(\mathcal{T}^{L}_{geo,\sigma})$ that satisfies the stated error bound. 
By Theorem~\ref{thm:analytic-regularity}, we have 
$u \in {\mathcal B}^1_\beta(\Omega)$ for $\beta = 1/2-s + \varepsilon$, $\varepsilon > 0$ arbitrary and $u \in C(\overline{\Omega})$. 
We approximate $u \in C(\overline{\Omega})$ by a $v \in S^{p,1}({\mathcal T}^{L}_{geo,\sigma})$ defined as 
the linear interpolant in the elements abutting $\partial\Omega$ and the Gauss-Lobatto interpolant of degree $p$ on all other elements. 
Selecting $\beta':= 1-s-\varepsilon'$ for $\varepsilon'$ sufficiently small, Lemma~\ref{lemma:weighted-embedding} shows 
$
\|u - v\|_{\widetilde{H}^s(\Omega)} \leq C \|u - v\|_{H^1_{\beta'}(\Omega)}, 
$
which can be estimated by summing elementwise error contributions. 
For the approximation on the elements abutting $\partial\Omega$, we observe that, for sufficiently small $\varepsilon$, 
we can select $\varepsilon'$ to ensure 
\begin{equation}
3/2-s+\varepsilon = \beta+1 < \min\{\beta'+1,3/2-\varepsilon\} = \min\{2-s-\varepsilon', 3/2-\varepsilon\}. 
\end{equation}
In Lemma~\ref{lemma:approx-on-small-element}, 
we may therefore replace $x^{\min\{\beta'+1,3/2-\varepsilon\}}$ by $x^{\beta+1}$. 
Combining then Lemma~\ref{lemma:approx-on-small-element} with a scaling argument, 
we get for $T_i$ with $\overline{T}_i \cap \partial\Omega \ne \emptyset$ 
\begin{align*}
\|r^{\beta'-1} (u - v) \|_{L^2(T_i)} + 
\|r^{\beta'} (u - v)^\prime \|_{L^2(T_i)} \leq C h_i^{1-\beta-(1-\beta')} \|r^{\beta+1} u^{\prime\prime}\|_{L^2(T_i)}.  
\end{align*}
For the remaining elements $T_i$ with $\overline{T}_i \cap \partial\Omega = \emptyset$, 
we use $u \in {\mathcal B}^1_\beta(\Omega)$ and get following 
\cite[Thm.~{3.13}]{ApeMel15} with a scaling argument 
\begin{align*}
\|r^{\beta'-1} (u - v) \|_{L^2(T_i)} + 
\|r^{\beta'} (u - v)^\prime \|_{L^2(T_i)} \leq C h_i^{1-\beta - (1-\beta')} e^{-b p} 
\end{align*}
for some $b > 0$ independent of $p$. 
Noting that $\beta' - \beta = 1/2-\varepsilon - \varepsilon' > 0$, we obtain from a geometric series argument
by summation over all elements and using $h_i = O(\sigma^L)$ for the elements abutting on $\partial\Omega$ that 
$$
\|u - v\|_{H^1_{\beta'}(\Omega)} \lesssim \sigma^{L(\beta'-\beta)} + e^{-b p} = \sigma^{L(1/2-\varepsilon-\varepsilon')} + e^{-bp}. 
$$
The proof of Theorem~\ref{thm:hp-approx} is completed by suitably adjusting $\varepsilon$.  
\end{proof}

\section{Numerical example}

%We present a numerical example that validates our main result, Theorem~\ref{thm:hp-approx}.
%
On the interval $(-1,1)$ with $f = 1$, the exact solution to (\ref{eq:weakform}) is 
$\displaystyle 
%u(x) = \frac{2^{-2s}\sqrt{\pi}}{\Gamma(s+1/2)\Gamma(1+s)} (1-x^2)^s.
u(x) = 2^{-2s}\sqrt{\pi}(\Gamma(s+1/2)\Gamma(1+s))^{-1} (1-x^2)^s.
$
The singularity of $u(x) \sim {\rm dist}(x,\partial\Omega)^s$ is generic for the Dirichlet problem
of the integral fractional Laplacean, also in higher dimensions, \cite{ros-oton-serra14}.
%As it is typical for solutions to fractional PDEs, 
%the solution becomes singular at the boundary of the computational domain (it behaves like $\dist(x,\partial\Omega)^s$). 
We employ the geometric mesh ${\mathcal T}^L_{geo,\sigma}$ that is 
graded towards $\pm 1$ with grading factor $\sigma = 0.6$ and $L$ levels of refinement. 
We utilize two $hp$-FEM spaces: a) the space $S^{L,1}_0({\mathcal T}^{L}_{geo,\sigma})$ of piecewise polynmomials
of degree $p = L$ and b) the subspace $\widetilde S^L \subset S^{L,1}_0({\mathcal T}^{L}_{geo,\sigma})$ obtained 
by lowering the polynomial degree from $p = L$ to $1$ in the two elements touching $\partial\Omega$.  
The $hp$-FEM approximation is computed in {\sc Matlab} by using an implementation from \cite{DadicBacc}.
%\begin{figure}
%\begin{center}
%\includegraphics[width=0.6\textwidth]{s5over10.jpg}
%\caption{Exponential convergence of $hp$-FEM.}
%\label{fig:convergence}
%\end{center}
%\end{figure}

 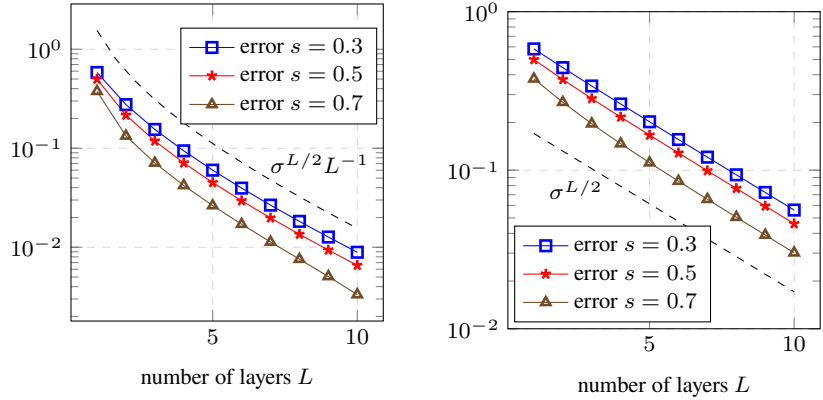
\begin{figure}[t]
 \begin{minipage}{.49\linewidth}
 \begin{center}
 %\pgfplotsset{
%  log x ticks with fixed point/.style={
%      xticklabel={
%        \pgfkeys{/pgf/fpu=true}
%        \pgfmathparse{exp(\tick)}%
%        \pgfmathprintnumber[fixed relative, precision=3]{\pgfmathresult}
%        \pgfkeys{/pgf/fpu=false}
%      }
%  },
%  log y ticks with fixed point/.style={
%      yticklabel={
%        \pgfkeys{/pgf/fpu=true}
%        \pgfmathparse{exp(\tick)}%
%        \pgfmathprintnumber[fixed relative, precision=3]{\pgfmathresult}
%        \pgfkeys{/pgf/fpu=false}
%      }
%  }
%}

\begin{tikzpicture}
\begin{semilogyaxis}[
    %width=\textwidth, height=6.0cm,     % size of the image
    width=\textwidth, height=5.8cm,     % size of the image
    grid = major,
    grid style={dashed, gray!30},
    axis background/.style={fill=white},
    xlabel={number of layers $L$}, 
    legend style={at={(0.35,0.62)},anchor=south west,font=\footnotesize},
    xtick = {},
    ytick = {},
   % x tick label style={yshift=-0.5em,log x ticks with fixed point},
    ]
    
    \addplot+[solid,mark=square,mark size=2pt,mark options={line width=1.0pt}] table    
    [
    x=L,
    y=err,
    col sep=comma
    ]{dataS03.txt};    
    \addlegendentry{error $s = 0.3$}

    \addplot+[solid,mark=star,mark size=2pt,mark options={line width=1.0pt}] table    
    [
    x=L,
    y=err,
    col sep=comma
    ]{dataS05.txt};    
    \addlegendentry{error $s = 0.5$}

    \addplot+[solid,mark=triangle,mark size=2pt,mark options={line width=1.0pt}] table    
    [
    x=L,
    y=err,
    col sep=comma
    ]{dataS07.txt};    
    \addlegendentry{error $s = 0.7$}
%
%    \addplot+[solid,mark=diamond,mark size=2pt,mark options={line width=1.0pt}] table    
%    [
%    x=N,
%    y=errH1loc,
%    col sep=comma
%    ]{plotData.txt};    
%    \addlegendentry{$H^1$-local}

%\addplot [black,dashed] expression [domain=40:41000, samples=15] {0.85*x^(1/4-1/4)} node[below,yshift=-0.0cm,xshift=-0.2cm] {$O(1)$};
   % \addplot [black,dashed] expression [domain=40:10300, samples=15] {x^(-1/4-1/8)} node[below,yshift=0.075cm] {$O(h^{3/4})$};
    %anchor=south east
    \addplot [black,dashed ] expression [domain=1:10, samples=15] {2*0.6^(x/2)/x} node [below,xshift=-0.5cm,yshift=1.1cm] {$\sigma^{L/2} L^{-1}$};

 \end{semilogyaxis} 
\end{tikzpicture}
 
 \end{center}
 \end{minipage}
  \begin{minipage}{.49\linewidth}
 \begin{center}
 %\pgfplotsset{
%  log x ticks with fixed point/.style={
%      xticklabel={
%        \pgfkeys{/pgf/fpu=true}
%        \pgfmathparse{exp(\tick)}%
%        \pgfmathprintnumber[fixed relative, precision=3]{\pgfmathresult}
%        \pgfkeys{/pgf/fpu=false}
%      }
%  },
%  log y ticks with fixed point/.style={
%      yticklabel={
%        \pgfkeys{/pgf/fpu=true}
%        \pgfmathparse{exp(\tick)}%
%        \pgfmathprintnumber[fixed relative, precision=3]{\pgfmathresult}
%        \pgfkeys{/pgf/fpu=false}
%      }
%  }
%}

\begin{tikzpicture}
\begin{semilogyaxis}[
    width=\textwidth, height=5.8cm,     % size of the image
    %title=Condition Numbers,
    ymin = 0.01,
    ymax = 1,
    grid = major,
    grid style={dashed, gray!30},
    axis background/.style={fill=white},
    xlabel={number of layers $L$}, 
    legend style={at={(0.02,0.02)},anchor=south west,font=\footnotesize},
    xtick = {},
    ytick = {},
   % yticklabels={}
   % x tick label style={yshift=-0.5em,log x ticks with fixed point},
    ]
    
    \addplot+[solid,mark=square,mark size=2pt,mark options={line width=1.0pt}] table    
    [
    x=L,
    y=err,
    col sep=comma
    ]{dataS03p1.txt};    
    \addlegendentry{error $s = 0.3$}

    \addplot+[solid,mark=star,mark size=2pt,mark options={line width=1.0pt}] table    
    [
    x=L,
    y=err,
    col sep=comma
    ]{dataS05p1.txt};    
    \addlegendentry{error $s = 0.5$}

    \addplot+[solid,mark=triangle,mark size=2pt,mark options={line width=1.0pt}] table    
    [
    x=L,
    y=err,
    col sep=comma
    ]{dataS07p1.txt};    
    \addlegendentry{error $s = 0.7$}
%
%    \addplot+[solid,mark=diamond,mark size=2pt,mark options={line width=1.0pt}] table    
%    [
%    x=N,
%    y=errH1loc,
%    col sep=comma
%    ]{plotData.txt};    
%    \addlegendentry{$H^1$-local}

%\addplot [black,dashed] expression [domain=40:41000, samples=15] {0.85*x^(1/4-1/4)} node[below,yshift=-0.0cm,xshift=-0.2cm] {$O(1)$};
   % \addplot [black,dashed] expression [domain=40:10300, samples=15] {x^(-1/4-1/8)} node[below,yshift=0.075cm] {$O(h^{3/4})$};
    %anchor=south east
    \addplot [black,dashed ] expression [domain=1:10, samples=15] {0.22*0.6^(x/2)} node [below,xshift=-2.9cm,yshift=1.6cm] {$\sigma^{L/2}$};

 \end{semilogyaxis} 
\end{tikzpicture}
 
 \end{center}
 \end{minipage}
\caption{Exponential energy norm error convergence of 
             $hp$-FEM on geometric mesh with grading factor $\sigma = 0.6$ 
for $s \in \{0.3, 0.5, 0.7\}$, $\Omega = (-1,1)$, $f = 1$. Left: $hp$-FEM 
based on $S^{p,1}_0({\mathcal T}^{L}_{geo,\sigma})$ with $p = L$. Right: $hp$-FEM 
based on subspace $\widetilde{S}^L \subset S^{L,1}_0({\mathcal T}^{L}_{geo,\sigma})$.} 
\label{fig:convergence}
 \end{figure}
 In Fig.~\ref{fig:convergence},
 the energy norm error $\sqrt{a(u - u_N,u - u_N)} = \sqrt{a(u,u) - a(u_N,u_N)}$ 
between the exact solution $u\in \widetilde{H}^s(\Omega)$ 
and the $hp$-FEM approximation $u_N$ is plotted versus $L$. 
Note that $a(v,v) \sim \|v\|^2_{\widetilde{H}^s(\Omega)}$. 
The left panel shows the performance of $hp$-FEM based on 
$S^{L,1}_0({\mathcal T}^{L}_{geo,\sigma})$ whereas the right panel that of $hp$-FEM based on 
$\widetilde S^L$. 
As predicted by Theorem~\ref{thm:hp-approx}, 
we observe exponential convergence with respect to the number of layers $L$.
In fact, the 
convergence is close to ${\mathcal O}(\sigma^{L/2} L^{-1})$. The additional factor $L^{-1}$ is due to the fact that we approximate
by polynomials of degree $p = L$ on the boundary elements, whereas the proof of Theorem~\ref{thm:hp-approx} employed 
the linear interpolant on these elements, i.e., an approximation in $\widetilde S^L$. The right panel of Fig.~\ref{fig:convergence} shows the convergence behavior ${\mathcal O}(\sigma^{L/2})$ predicted by Theorem~\ref{thm:hp-approx}. 

\begin{acknowledgement}
The research of JMM was supported by the Austrian Science Fund (FWF) 
project F 65.
%If you want to include acknowledgments of assistance and the like at the end of an individual chapter please use the \verb|acknowledgement| environment -- it will automatically be rendered in line with the preferred layout.
\end{acknowledgement}
\bibliography{bibliography}{}
\bibliographystyle{spmpsci}
\end{document}